\documentclass[12pt]{article}
\usepackage{amsmath,amssymb,amsthm}
\usepackage[margin=1in]{geometry}
\usepackage{hyperref}
\usepackage{dsfont} 
\usepackage{enumerate}
\usepackage{amsthm,amsmath,amssymb}
\usepackage{tikz}
\usepackage{caption}
\usepackage{changepage}

\usepackage{graphicx}
\usepackage{color}

\usepackage[algo2e,ruled,vlined]{algorithm2e}
\usepackage[normalem]{ulem}
\usepackage{algpseudocode}
\usepackage{color}
\usepackage{ mathrsfs }
\usepackage{mathtools}
\usepackage{rotating}
\usepackage[utf8]{inputenc}

\newcommand{\RR}{\mathbb{R}}

\newtheorem{thm}{Theorem}[section]
\newtheorem{lem}[thm]{Lemma}
\newtheorem{prop}[thm]{Proposition}

\newtheorem{cor}[thm]{Corollary}

\newtheorem{rem}[thm]{Remark}
\theoremstyle{definition}
\newtheorem{defn}[thm]{Definition}
\newtheorem{ex}[thm]{Example}

\newcommand{\mult}{{\rm mult}}

\begin{document}

\title{Strictly Interlaced Spectral Data for the Weighted Matching Polynomial of a Graph}

\author{
Shaun Fallat\thanks{Department of Mathematics and Statistics, University of Regina,  Regina, Saskatchewan, S4S0A2, Canada. (shaun.fallat@uregina.ca).}
\and
Johnna Parenteau \thanks{Department of Mathematics and Statistics, University of Regina,  Regina, Saskatchewan, S4S0A2, Canada. (parenjoh@uregina.ca).} 
}


\maketitle

\begin{abstract} 
Interlacing of the real roots of a weighted matching polynomial for a graph $G$ and that of a vertex-deleted subgraph is classical and well-known. In the context of strict interlacing of distinct roots, a demonstrated graph construction gives rise to a new classification of graphs, called ${ \rm SRSI}$ graphs, which include graphs that contain a Hamilton path. Graphs with a perfect (or nearly perfect) matching are shown to exhibit the SRSI$_w(v)$ property with respect to a particular weighting and for specific vertices, and all such graphs are characterized via this graph construction. As a consequence, we also characterize the trees that possess the SRSI property for all edge weightings and all vertices. 
\end{abstract}

\noindent {\bf Keywords} 
Matchings, matching polynomial, weighted graphs, interlacing roots, trees, Hamilton paths.

\noindent {\bf AMS subject classification:} Primary: 05C50, 05C31; Secondary: 15A29


\section{Introduction}

For a simple graph, $G=(V,E)$, a \emph{matching} $M$ in $G$, is a set of pairwise disjoint edges, and any vertex incident with a matched edge is said to be {\em saturated}; otherwise the vertex is {\em unsaturated}. We say $M$ is a $k$-matching if $M$ has size $k$.  A \emph{maximal matching} is a matching $M$, in which no additional edge can be added whilst maintaining the disjoint edge property. A matching is said to be \emph{maximum} if it contains the largest number of edges possible. If a matching $M$ saturates all vertices in the graph it is called a {\em perfect matching}, and if $M$ saturates all but one vertex in a graph, we call $M$ a {\em nearly perfect matching}.  A perfect matching $M$ has size $|M|=|V|/2$, assuming that $|V|$ is even, and a nearly perfect matching $M$ has size $|M|=(|V|-1)/2$ if $|V|$ is odd. 

Matchings have a rich history in graph theory partly because of their vast applications in network and scheduling problems, theoretical chemistry, and even statistical physics. Beginning with K\"onig's Theorem, the study of matchings arose in the early 1900s and has since grown with continued interest. Initial contributions regarding the theory of matching polynomials, however, came from Heilmann and Lieb as a means to study the monomer-dimer problem in statistical physics (\cite{hl1}, \cite{hl2}), or equivalently, perfect or nearly-perfect matchings, but, interestingly, they failed to name their polynomial. Coined as the reference, acyclic, or king polynomial, depending on the field of interest, the matching polynomial has defined in several ways over time, but they all align with the following notation 
$$ m(G,x) = \underset{k}{\sum}(-1)^k\mu(G,k)x^{n-2k}, \label{mpolydef}$$
where $\mu(G,k)$ denotes the number of $k$-matchings in a graph, $G$. We refer the reader to the works \cite{bf}, \cite{g}, \cite{gg}, \cite{hl1}, and \cite{hl2} for a comprehensive overview of the existing literature surrounding the matching polynomial; however, the work presented here will not rely on these existing results unless explicitly stated. As a means to extend the traditional matching polynomial $m(G,x)$ from simple unweighted graphs to all simple weighted graphs, we alter the underlying assumption of $m(G,x)$ to accommodate weighted graphs. Of particular interest here is an investigation into the roots of this weighted matching polynomial, and, in particular, weighted graphs whose 
weighted matching polynomial has real and distinct roots. 

The remainder of this paper is divided into three sections. The next section outlines essential terminology and notation and presents a brief survey of foundation results concerning weighted matching polynomials. Section 3 contains the main results of this work including a characterization of all weighted graphs that possess a strict interlacing property between the weighted matching polynomial of a graph and a vertex-deleted subgraph.

\section{Preliminaries} 
Given a simple graph $G=(V,E)$, we consider the weightings of the edges in $E$, or simply a function $w : E \rightarrow \RR^+$ and denote the weight of an edge $e=\{a,b\}$ as $w(e)$ or $w_{ab}$ as dictated by context. For any graph $G=(V,E)$, suppose that $X\subset V$ and $Y \subset E$. Then, we let $G \setminus X$ (resp. $G \setminus Y$) denote the subgraph obtained from $G$ by removing the vertices in $X$  (resp. by removing the edges in $Y$).

Suppose $M$ is a \emph{k}-matching in a weighted graph, $G$. Then the \emph{weight of M} \label{W(M)def}is 
$$ w(M) = \underset{e_j \in M}{ \prod} w(e_j).$$
By adding the weights of all possible $k$-matchings in $G$, for any value of $k$, we define the \emph{weighted k-matching number} \label{mudef} as 
$$ \mu_w(G,k) = \underset{\text{$k$-matchings in $G$}}{ \hspace{-0.75cm}\sum } \underset{\hspace{0.5cm} e_j \in M}{ \hspace{0.5cm}\prod} w(e_j).$$
For our purposes, the \emph{weighted matching polynomial}  is defined as
 
 $$ m_w(G,x) = \overset{\lfloor \frac{n}{2} \rfloor}{\underset{k = 0}{\sum}}(-1)^k\mu_w(G,k)x^{n-2k}. \label{mwpolydef}$$ 
 
Since $\mu_w(G,0) = 1$, it follows that the weighted matching polynomial is a monic polynomial. Observe that for any graph, $G$, the \emph{k-matching number} coincides with $\mu(G,k)$ when $w(e_i)=1$ for all edges, $e_i \in E$. When a graph is unweighted, its weight function will be denoted as $w \equiv 1$. 
Several versions of a weighted matching polynomial have been studied and analyzed; see, for example, \cite{bf, gbook, g, gg, hl1, hl2, kuchen}. 

A weighted matching polynomial has several interesting and well-known properties and associated expressions, particularly when a vertex or edge is removed from a (weighted) graph, leading to various {\em bridge formulations}, versions of which can be found in \cite{gbook}.

\begin{prop}
    Suppose $G$ is a given weighted graph on $n$ vertices with specified edge weight function $w$.
    Then:
    \begin{enumerate}
\item If $G$ is disconnected with components
$G_1, G_2, \ldots, G_s,$ where $G_i =(V_i, E_i, w)$ for all $i$, then $m_{w}(G,x) = m_w(G_1,x)\cdot m_w(G_2, x) \cdots m_w(G_s,x);$ 
\item  For any edge, $e = \{ i,j\}$ in $E$ it follows $$m_{w}(G,x) = m_w({G \setminus\{e\}},x) - w(e) \cdot m_{w}(G\setminus\{i,j\}, x);$$
\item For any vertex, $v$, in $V$, $$m_{w}(G,x) = x \cdot m_w({G\setminus \{v\}},x) - \underset{v_i\sim v}{\sum} w_{vv_i} m_{w}(G\setminus\{v,v_i\}, x),$$ where $v_1, v_2, \ldots, v_s$ are the neighbours of $v$.
\end{enumerate}
\label{bridges}
\end{prop}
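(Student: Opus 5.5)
All three identities will be proved by comparing coefficients: fix a power $x^{n-2k}$ and classify the $k$-matchings of $G$ that contribute to $\mu_w(G,k)$. Throughout I use that $w(M)$ is multiplicative over disjoint edge sets, and the conventions $m_w(\emptyset,x)=1$ and $\mu_w(H,j)=0$ for $j<0$ or $j>\lfloor |V(H)|/2\rfloor$. These conventions let all sums run over all integers $k$ without changing any polynomial.

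\textbf{Part 1.} Every matching $M$ of $G$ decomposes uniquely as $M=M_1\cup\cdots\cup M_s$, where $M_i=M\cap E_i$ is a matching of $G_i$. Conversely, any choice of matchings $M_i$ in the $G_i$ gives a matching of $G$, and $w(M)=\prod_i w(M_i)$. Hence $\mu_w(G,k)=\sum_{k_1+\cdots+k_s=k}\prod_i \mu_w(G_i,k_i)$. Expanding the product $\prod_i m_w(G_i,x)$, the coefficient of $x^{n-2k}$ is
\[
\sum_{k_1+\cdots+k_s=k}\prod_i (-1)^{k_i}\mu_w(G_i,k_i),
\]
since $\sum_i(|V_i|-2k_i)=n-2k$. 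This coefficient equals $(-1)^k\mu_w(G,k)$, as required.

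\textbf{Part 2.} Split the $k$-matchings of $G$ according to whether they contain $e=\{i,j\}$.
\begin{itemize}
\item Those avoiding $e$ are exactly the $k$-matchings of $G\setminus\{e\}$.
\item Those containing $e$ are exactly $M'\cup\{e\}$, where $M'$ ranges over the $(k-1)$-matchings of $G\setminus\{i,j\}$, and $w(M'\cup\{e\})=w(e)\,w(M')$.
\end{itemize}
So $\mu_w(G,k)=\mu_w(G\setminus\{e\},k)+w(e)\,\mu_w(G\setminus\{i,j\},k-1)$. Multiply by $(-1)^k x^{n-2k}$ and sum over $k$. The second term becomes
\[
(-1)^k w(e)\,\mu_w(G\setminus\{i,j\},k-1)\,x^{(n-2)-2(k-1)} = -\,w(e)\,(-1)^{k-1}\mu_w(G\setminus\{i,j\},k-1)\,x^{(n-2)-2(k-1)},
\]
which sums to $-w(e)\,m_w(G\setminus\{i,j\},x)$. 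Since $G\setminus\{e\}$ still has $n$ vertices, the first term sums to $m_w(G\setminus\{e\},x)$. This gives the stated identity.

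\textbf{Part 3.} Either iterate Part 2 over the edges at $v$, or classify directly according to the edge of $M$ covering $v$.
\begin{itemize}
\item If $v$ is unsaturated, $M$ is a $k$-matching of $G\setminus\{v\}$. Since $G\setminus\{v\}$ has $n-1$ vertices, these contribute $x\cdot m_w(G\setminus\{v\},x)$, because $x\cdot x^{(n-1)-2k}=x^{n-2k}$.
\item Otherwise $M$ contains exactly one edge $\{v,v_i\}$, and $M\setminus\{\{v,v_i\}\}$ is a $(k-1)$-matching of $G\setminus\{v,v_i\}$. As in Part 2, this class contributes $-w_{vv_i}\,m_w(G\setminus\{v,v_i\},x)$.
\end{itemize}
These cases are disjoint, so summing over the neighbours $v_i$ gives the identity.

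The only delicate point is the bookkeeping: the exponents in the different vertex counts must line up, and the boundary values of $k$ must be handled. The conventions adopted at the start take care of both, so I expect no real obstacle.
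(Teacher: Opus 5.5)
Your proof is correct. The paper takes (1) and (2) as standard and only sketches (3): it applies (2) successively to the edges $e_1,\dots,e_s$ at $v$, then uses (1) on $G\setminus\{e_1,\dots,e_s\}=(G\setminus\{v\})\cup\{v\}$. Your direct classification of matchings by the edge covering $v$ is the unrolled form of that same recursion, and you mention the iteration route yourself, so the approaches are essentially the same.
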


For completeness, we sketch a verification of item (3) in Proposition \ref{bridges}, which is referred to several times in the next section. 
Suppose $G = (V,E,w)$ is a graph with fixed weight function, $w$, and let $v \in V$. For every neighbour, $v_1, v_2, \ldots, v_s$, of $v$, let $e_i$ be the edge incident with $v$ and $v_i$. Applying item (2) in Proposition \ref{bridges} with respect to the edges, $e_i$, 
\begin{align*}
m_w(G,x) &= \underbrace{m_w({G \setminus \{e_1\}},x)} - w(e_1) \cdot m_{w}(G\setminus\{v,v_1\}, x)\\
&\hspace{1.5cm} \downarrow \text{ apply (2) } \\
&= \underbrace{m_w({G \setminus \{e_1,e_2\}},x)} - w(e_2) \cdot m_{w}(G\setminus\{v,v_2\}, x) - w(e_1) \cdot m_{w}(G\setminus\{v,v_1\}, x) \\
& \hspace{1.5cm} \downarrow \text{continue to apply (2) as needed} \\
&= m_w({G \setminus \{e_1,e_2, \ldots, e_s\}},x)- \underset{v \sim v_i}{\sum}w_{vv_i}m_w(G \setminus \{v,v_i\},x). 
\end{align*}
Observe that $G \setminus \{e_1,e_2, \ldots, e_s\} = (G \setminus \{v\}) \cup \{v\}$ and therefore applying (1) from the proposition yields the desired relation.

As in the well-studied unweighted case, the weighted matching polynomial for a tree can be reinterpreted as the characteristic polynomial of a certain matrix associated with a tree. Suppose $G$ is a graph on $n$ vertices with specified weight function $w$. For $G$, we associate a real symmetric $n \times n$ matrix $A=[a_{ij}]$ (and write $A \in  S^o_+(G)$) by setting $a_{ij} = w_{ij}$ whenever $i \neq j$ and $\{i,j\} \in E$; otherwise, $a_{ij}=0$. In particular, all main diagonal entries of $A$ are zero, which is known as a hollow matrix (see \cite{hollow}). In the case of a tree $T$, if $A \in  S^o_+(T)$, it follows that $\det A[S]$ (here $A[S]$ means the principal submatrix of $A$ whose rows and columns are indexed by $S$) is non-zero if and only if the induced subgraph $T[S]$ contains a (maximum) matching of size $|S|/2$. In particular, $|S|$ must be even in this case.
Furthermore, if $M$ is a maximum matching of size $|S|/2$ in the induced subgraph $T[S]$, then
\[ \det A[S] = (-1)^{\frac{|S|}{2}} \underset{e_j \in M}{\prod} w(e_j). \]

From the above analysis, it is straightforward and well-known to prove the following identity involving such matrices associated with trees, which is easily extended to forests.

\begin{prop} 
For a tree, $T$, with weighted edges, $\{e_1, \ldots, e_{n-1} \}$, and $A \in S^o_+(T)$ such that $a_{ii} = 0$ and $a_{ij}=\sqrt{w(e)}>0$  if $e=\{ i,j\}$ is an edge in $T$. Then 
$$\det(xI-A) = m_w(T,x).$$
\label{mw-tree}
\end{prop}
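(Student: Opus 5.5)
We prove the identity by expanding the characteristic polynomial over principal minors and then reading each minor combinatorially. The standard expansion gives
\[
\det(xI-A)=\sum_{k=0}^{n} (-1)^k \Big(\sum_{|S|=k} \det A[S]\Big) x^{n-k},
\]
where the inner sum runs over all $k$-subsets $S$ of the vertex set. The goal is to show that this sum reproduces $m_w(T,x)$ coefficient by coefficient.

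First we use the discussion preceding the statement. Since every induced subgraph $T[S]$ of a tree is a forest, $\det A[S]$ equals the sum, over permutations of $S$, of signed products of entries of $A[S]$. Because $A$ is hollow, any permutation with a nonzero contribution has no fixed points. Because $T[S]$ is acyclic, it also has no cycles of length at least $3$, since such a cycle would trace a closed walk on distinct vertices and hence a cycle in $T$. So the only surviving permutations are fixed-point-free involutions, that is, perfect matchings $M$ of $T[S]$. Each contributes
\[
\prod_{\{i,j\}\in M}(-a_{ij}a_{ji}) = (-1)^{|S|/2}\prod_{e\in M} w(e),
\]
using $a_{ij}a_{ji}=(\sqrt{w(e)})^2=w(e)$.

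Next we use the fact, quoted in the text, that a forest has at most one perfect matching; the same conclusion follows from the expansion even without uniqueness. This gives $\det A[S]=0$ for $|S|$ odd, and for $|S|=2k$,
\[
\det A[S]=(-1)^k\sum_{M\text{ perfect matching of }T[S]} w(M).
\]

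Substituting into the expansion, only the even indices $2k$ survive, and the sign is $(-1)^{2k}(-1)^k=(-1)^k$. Summing over all $2k$-subsets $S$ is the same as summing over all $k$-matchings of $T$, because each $k$-matching $M$ is a perfect matching of exactly one induced subgraph, namely $T[V(M)]$. Hence the coefficient of $x^{n-2k}$ is $(-1)^k\mu_w(T,k)$, which is exactly the coefficient in $m_w(T,x)$.

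The only delicate step is showing that no permutation with a cycle of length at least $3$ contributes; that is where the tree (acyclic) hypothesis enters.
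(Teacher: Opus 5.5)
Your proposal is correct and follows the route the paper indicates: expand $\det(xI-A)$ as a signed sum of principal minors, then use the fact that for a tree $\det A[S]$ equals $(-1)^{|S|/2}$ times the weight of a perfect matching of $T[S]$, and is zero otherwise. You also prove that minor formula via the Leibniz expansion (hollowness rules out fixed points, acyclicity rules out permutation cycles of length at least $3$), which the paper only asserts, and your version does not even need the uniqueness of the perfect matching.
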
 

\vspace*{-0.75cm}

Studies involving properties of the roots of the (weighted) matching polynomial are fundamental in the arena of algebraic matching theory. For any weighted graph $G$, we let $\rho(m_w(G, x))$ denote the collection of roots of $m_w(G,x)$. Furthermore, if $\lambda \in \rho(m_w(G,x))$, then let $\mult(m_w(G,x);\lambda)$ denote the multiplicity of $\lambda$ as a zero of $m_w(G,x)$. To conclude, we state two foundational results concerning the roots of the weighted matching polynomial (see also \cite{gbook}).

\begin{thm} 
Let $G=(V,E,w)$ be a graph with fixed weight function, $w$. Then, the roots of $m_w(G,x)$ are real and are symmetric about 0. 
\label{realroots}
\end{thm}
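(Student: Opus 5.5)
Symmetry about $0$ is immediate from the definition. Every term of
$$m_w(G,x) = \sum_{k}(-1)^k\mu_w(G,k)x^{n-2k}$$
has degree $n-2k$, which has the same parity as $n$. Hence $m_w(G,-x) = (-1)^n m_w(G,x)$, and $\lambda$ is a root exactly when $-\lambda$ is, with equal multiplicities. The substance is realness. I would prove it by reducing to the tree case, Proposition \ref{mw-tree}, using the path tree (Godsil's construction) adapted to weights.

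For a connected weighted graph $G$ and a vertex $u$, I would define the path tree $T=T(G,u)$ as follows. Its vertices are the paths in $G$ starting at $u$. Two such paths are adjacent when one extends the other by a single edge, and that tree edge inherits the weight $w$ of the extending edge of $G$. The key identity to prove is
$$\frac{m_w(G\setminus\{u\},x)}{m_w(G,x)} = \frac{m_w(T\setminus\{u\},x)}{m_w(T,x)},$$
where $u$ in $T$ denotes the trivial path. Proceed by induction on $|V|$, applying item (3) of Proposition \ref{bridges} at $u$ to both $G$ and $T$. Dividing the recursion by $m_w(G\setminus\{u\},x)$ gives
$$\frac{m_w(G,x)}{m_w(G\setminus\{u\},x)} = x - \sum_{v\sim u} w_{uv}\,\frac{m_w(G\setminus\{u,v\},x)}{m_w(G\setminus\{u\},x)}.$$
In $T$, deleting the root leaves a forest whose components are the branches at the children $v$. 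The branch at $v$ is exactly the path tree $T(G\setminus\{u\},v)$ of the component of $G\setminus\{u\}$ containing $v$. By item (1) of Proposition \ref{bridges} the other components cancel in each ratio, so the induction hypothesis applies term by term and the two recursions match.

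From this identity I would get realness by a second induction showing that $m_w(G,x)$ divides $m_w(T(G,u),x)$. Write
$$m_w(T,x) = m_w(G,x)\cdot\frac{m_w(T\setminus\{u\},x)}{m_w(G\setminus\{u\},x)}.$$
The second factor is a product over the branches, which are path trees of subgraphs of $G\setminus\{u\}$. By induction $m_w(G\setminus\{u\},x)$ divides $m_w(T(G\setminus\{u\},v),x)$ for the appropriate branch. The other factors of $m_w(G\setminus\{u\},x)$, coming from other components, are handled similarly through item (1). Hence the quotient is a polynomial.

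Then every root of $m_w(G,x)$ is a root of $m_w(T,x) = \det(xI-A)$ with $A\in S^o_+(T)$, using entries $\sqrt{w(e)}$. Here positivity of $w$ is used to take the square roots, and Proposition \ref{mw-tree} applies. Since $A$ is real symmetric, its eigenvalues are real. For disconnected $G$, item (1) reduces the claim to the components.

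The main obstacle is the divisibility step, specifically the bookkeeping when $G\setminus\{u\}$ is disconnected, so that several branches come from different components. I would handle it by strengthening the induction statement to forests. The claim to prove is that $m_w(F,x)$ divides the product of the path-tree polynomials, one per component with a chosen root, which makes the factor-by-factor cancellation clean.
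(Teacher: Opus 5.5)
Your argument is correct. The paper gives no proof of its own here, only deferring to \cite{gbook}, and your route is essentially the standard path-tree argument from that source with weights carried along the tree edges: the weighted path tree, the ratio identity via item (3) of Proposition \ref{bridges}, the divisibility $m_w(G,x)\mid m_w(T(G,u),x)$, and then Proposition \ref{mw-tree}. The disconnected bookkeeping you flag needs no strengthening to forests. Every component $C_j$ of $G\setminus\{u\}$ contains some neighbour $v_j$ of $u$, so $\prod_j m_w(C_j,x)$ divides $\prod_j m_w(T(C_j,v_j),x)$, which in turn divides $m_w(T\setminus\{u\},x)$.
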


\begin{thm}
Let $G=(V,E,w)$ be a graph with a fixed weight function, $w$. Then, the roots of $m_w(G\setminus \{v\},x)$ interlace the roots of $m_w(G,x)$.  
\label{interlace}
\end{thm}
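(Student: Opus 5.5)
The plan is to reduce the statement to the tree case, using the path-tree (Godsil) construction adapted to weights, and then to invoke Cauchy interlacing for real symmetric matrices via Proposition \ref{mw-tree}. Fix $v \in V$ and let $T = T(G,v)$ be the path tree of $G$ rooted at $v$. Its vertices are the paths in $G$ starting at $v$, and two paths are adjacent when one extends the other by a single edge. Each tree edge joining a path $P$ to its one-edge extension $Pu$ inherits the weight of the $G$-edge between the last vertex of $P$ and $u$. The key identity to establish is the weighted version of Godsil's theorem:
\[
\frac{m_w(G\setminus\{v\},x)}{m_w(G,x)} = \frac{m_w(T\setminus\{v\},x)}{m_w(T,x)},
\]
together with the statement that $m_w(G,x)$ divides $m_w(T,x)$.

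I will prove this identity by induction on $|V|$. Apply item (3) of Proposition \ref{bridges} at $v$ in $G$ and divide by $m_w(G\setminus\{v\},x)$. This gives
\[
\frac{m_w(G,x)}{m_w(G\setminus\{v\},x)} = x - \sum_{v_i \sim v} w_{vv_i}\,\frac{m_w(G\setminus\{v,v_i\},x)}{m_w(G\setminus\{v\},x)} .
\]
Do the same in $T$. Deleting the root of $T$ leaves a forest whose components are the path trees $T(G\setminus\{v\}, v_i)$, so item (1) of Proposition \ref{bridges} splits the ratios into the same form. The induction hypothesis, applied to the smaller graph $G\setminus\{v\}$ rooted at $v_i$, then matches the two continued-fraction expressions term by term. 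Divisibility follows by the same induction: $m_w(T,x)/m_w(G,x)$ is a product of matching polynomials of the path trees of the smaller graphs $G\setminus\{v\}$ with respect to the $v_i$, or equivalently follows from clearing denominators. This bookkeeping step is where I expect the main obstacle, namely keeping track of the weights on the tree edges and checking that the forest decomposition matches exactly.

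With the identity in hand, realness (Theorem \ref{realroots}) follows. The tree $T$ has $m_w(T,x) = \det(xI - A)$ with $A$ real symmetric by Proposition \ref{mw-tree}, so its roots are real, and $m_w(G,x)$ divides it. For interlacing, Cauchy's theorem gives that the eigenvalues of $A$ with the row and column of $v$ deleted interlace those of $A$. In partial-fraction terms, $m_w(T\setminus\{v\},x)/m_w(T,x) = \sum_j c_j/(x-\theta_j)$ with all $c_j \ge 0$, since the residues are squared entries of eigenvectors.

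By the identity, $m_w(G\setminus\{v\},x)/m_w(G,x)$ has the same form after cancellation: it has only simple poles, each with a nonnegative residue. A rational function $p/q$ with $\deg p = \deg q - 1$ and a partial-fraction expansion with nonnegative residues is strictly decreasing between consecutive poles. Hence between consecutive distinct roots of $q$ the polynomial $p$ has exactly one root, counted with multiplicity after accounting for common factors. That is precisely the interlacing of the roots of $m_w(G\setminus\{v\},x)$ with those of $m_w(G,x)$, including multiplicities; a common root of multiplicity $k$ in $q$ appears with multiplicity at least $k-1$ in $p$. To make the multiplicity bookkeeping clean, I will phrase the final step as: the roots $\theta_1 \ge \dots \ge \theta_n$ of $m_w(G,x)$ and $\eta_1 \ge \dots \ge \eta_{n-1}$ of $m_w(G\setminus\{v\},x)$ satisfy $\theta_i \ge \eta_i \ge \theta_{i+1}$. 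This follows from the sign pattern of the decreasing rational function together with the leading coefficient $1$ of each.
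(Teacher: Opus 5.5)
Your route is the standard path-tree argument. The paper gives no proof of this theorem and simply defers to \cite{gbook}, where it is proved in exactly this way. Your inductive proof of the ratio identity, via item (3) of Proposition \ref{bridges} and the splitting of $T\setminus\{v\}$ into the path trees $T(G\setminus\{v\},v_i)$, is correct for every graph, connected or not.

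\textbf{The divisibility step.} The claim $m_w(G,x)\mid m_w(T,x)$ is false as stated when $G$ is disconnected. For $G=K_1\cup K_2$ with $v$ the isolated vertex, $T=K_1$ and $m_w(T,x)=x$, while $m_w(G,x)=x(x^2-w)$. The theorem is stated for arbitrary $G$, and you use divisibility twice: for realness, and implicitly to know that the common factor of $m_w(G\setminus\{v\},x)$ and $m_w(G,x)$ has only real roots. So this needs repair.
\begin{itemize}
\item For connected $G$ your induction does go through, but it uses connectivity essentially. Every component $C$ of $G\setminus\{v\}$ contains a neighbour $v_C$ of $v$. So $m_w(G\setminus\{v\},x)=\prod_C m_w(C,x)$ divides $\prod_C m_w(T(C,v_C),x)$, which divides $m_w(T\setminus\{v\},x)$. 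Clearing denominators in the identity then gives $m_w(G,x)\mid m_w(T,x)$. Your description of the quotient as ``a product of path-tree polynomials'' hides exactly this use of connectivity.
\item For disconnected $G$, split off the components not containing $v$ using item (1) of Proposition \ref{bridges}. They contribute the same real-rooted factor to both polynomials, and adjoining a common multiset of reals to two interlacing sequences preserves interlacing.
\item Alternatively, cite Theorem \ref{realroots} for realness and drop divisibility altogether.
\end{itemize}

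\textbf{The final multiplicity step.} The sentence ``between consecutive distinct roots of $q$ the polynomial $p$ has exactly one root'' is false. Take $P_3$ with $v$ the middle vertex: then $q=x(x^2-w_1-w_2)$ and $p=x^2$, which has no root in $(0,\sqrt{w_1+w_2})$. The clean version is as follows.
\begin{itemize}
\item Write $p=g\tilde p$ and $q=g\tilde q$ with $g$ monic and $\tilde p/\tilde q$ in lowest terms.
\item The poles of $\tilde p/\tilde q$ are simple with positive residues, and $\deg\tilde p=\deg\tilde q-1$. So $\tilde p$ has exactly one root strictly between consecutive roots of $\tilde q$, i.e.\ $\tilde p$ and $\tilde q$ strictly interlace.
\item Multiplying both by the real-rooted factor $g$ preserves interlacing and yields $\theta_i\ge\eta_i\ge\theta_{i+1}$.
\end{itemize}
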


\section{Simple Roots and Strict Interlacing in the Weighted Matching Polynomial}

 It has long been known that the eigenvalues of a symmetric, irreducible, and tridiagonal matrix are real and simple (that is, distinct). In other words, any symmetric matrix whose off-diagonal matrix pattern fits a path requires simple and distinct eigenvalues. Additionally, eigenvalue interlacing is an important and well-established property associated with symmetric matrices. To this end, 
if $ \lambda_1 \leq \lambda_2 \leq \cdots \leq \lambda_n$ are the eigenvalues of an $n \times n$ symmetric matrix $A$ and 
$ \mu_1 \leq \mu_2 \leq \cdots \leq \mu_{n-1}$ are the eigenvalues of the $(n-1) \times (n-1)$ principal submatrix of $A$ obtained by deleting row and column $i$, denoted by $A(i)$ ($1 \leq i \leq n$), then it follows that the eigenvalues of $A$ interlace the eigenvalues of $A(i)$ given by
\[ \lambda_1 \leq \mu_1 \leq \lambda_2 \leq \mu_2 \leq \cdots \leq \lambda_{n-1} \leq \mu_{n-1} \leq \lambda_n.\] In the case of 
$n \times n$ symmetric, irreducible, and tridiagonal matrices, more is known along these lines, namely,
\begin{equation} \lambda_1 < \mu_1 <\lambda_2 < \mu_2 < \cdots < \lambda_{n-1} < \mu_{n-1} < \lambda_n\label{strict-il} \end{equation}
whenever $i=1$ or $i=n$. We refer to the strict inequalities in (\ref{strict-il}) as {\em strict-interlacing inequalities}. In summation, the eigenvalues of any $n \times n$
symmetric, irreducible, and tridiagonal matrix $A$ are real, simple, and satisfy strict interlacing with respect to $A(1)$ or $A(n)$.

The main focus of this section is to study weighted graphs $G$ with the property that the roots of $m_w(G,x)$ are simple and, in addition, there exists a vertex $v$ in $G$ such that the roots of $m_w(G,x)$ and $m_w(G \setminus \{v\},x)$ strictly interlace. To this end, we make the following definition.

\begin{defn} \label{srsi}
A graph, $G =(V,E,w)$, on $n \geq 2$ vertices is said to be \emph{{\rm SRSI} with respect to vertex $v$ and a fixed weight function, $w$}, denoted ${\rm SRSI}_w(v)$, if: 
\begin{itemize} 
	\item[1)]{ $m_w(G,x)$ has simple roots $({\rm SR}_w)$, and } 
	\item[2)]{ $m_w(G\setminus \{v\},x)$ has simple roots that strictly interlace $m_w(G,x)$.} 
\end{itemize} 

If $G$ is SRSI$_w(v)$, then we call $v$ a ${\rm SRSI}_w$ vertex for $G$. Furthermore, the following acronyms will be used in relation to the above two properties.

\medskip 
\medskip 

\begin{tabular}{ |p{3cm}||p{12cm}| }
 \hline
 \multicolumn{2}{|c|}{Notation for Simple Roots and Strict Interlacing} \\
 \hline
 \textbf{Acronym} & \textbf{Definition}\\
 \hline
SR   & $G$ has simple roots for all weight functions\\
SR$_w$ & $G$ has simple roots with respect to the weight function, $w$ \\ \hline 
SRSI$_w(v)$ & $G$ has simple roots that strictly interlace when $v$ is removed and with respect to the weight function, $w$ \\
SRSI$(v)$ & $G$ has simple roots that strictly interlace when $v$ is removed and with respect to all weight functions  \\
SRSI$_w$ & $G$ has simple roots that strictly interlace when any vertex is removed and with respect to the weight function, $w$ \\
SRSI & $G$ has simple roots that strictly interlace when any vertex is removed and with respect to all weight functions\\
 \hline
\end{tabular}
\end{defn}

The graph below is an example of a graph (without a Hamilton path) that is ${ \rm SRSI}_w(v)$ for some of its vertices but for all edge weightings. 

\begin{ex}
\begin{figure}[!htb]
\centering
\includegraphics[scale=.85]{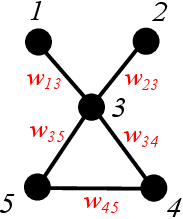} 
\caption{A graph which is ${ \rm SRSI}(1)$.} 
\label{WAntenna}
\end{figure} 

Consider the graph $G = (V,E,w)$ with a fixed weight function $w$ in Figure \ref{WAntenna}. Using Proposition \ref{bridges}, 
\begin{align*}
m_{w}(G,x) = x\cdot m_w({G\setminus \{1\}},x) - w_{13}\cdot  m_{w}(G\setminus\{1, 3\}, x).
\end{align*} 
Assume the roots for $m_{w}(G, x)$ are $\{ \lambda_1, \lambda_2, \ldots, \lambda_5 \}$, and denote the roots for $m_w(G \setminus \{1\}, x)$ by  $\{ \mu_1, \mu_2, \mu_3, \mu_4\}$. Observe $G\setminus\{1, 3\} = K_1 \cup K_2$, so $\rho \big(m_{w}(G\setminus\{1, 3\}, x)\big) = \{0, \pm \sqrt{w_{45}} \}$. Moreover, 
$m_w(G \setminus \{1\}, x) = x^4 - (w_{23}+w_{34}+w_{35}+w_{45})x^2 + w_{23}w_{45}$, 
and since $G \setminus \{1\}$ has a perfect matching, each $\mu_i \neq 0$ for $1 \leq i \leq 4$. Evaluating $m_w(G \setminus \{1\},x)$ at the root $\sqrt{w_{45}},$
\begin{align*}
m_w(G \setminus \{1\}, \sqrt{w_{45}}) &= {w_{45}}^2 - (w_{23}+w_{34}+w_{35}+w_{45}){w_{45}} + w_{23}w_{45} \\
&= -w_{45}(w_{34} + w_{35}) \\
&\neq 0. 
\end{align*} 
Thus, $ \pm \sqrt{w_{45}}$ is not a root of $m_w(G \setminus \{1\},x)$ or $m_w(G, x)$. In order to show the roots of $m_w(G \setminus \{1\}, x)$ strictly interlace the roots of $m_w(G, x)$, we further evaluate the bridge formula at the root $\mu_i$.  The first term on the right-hand side of the bridge formula vanishes, and the sign of the remaining terms  gives a negative. 
$$m_{w}(G,\mu_{i}) = \mu_i\cdot \underbrace{m_w(G \setminus \{1\}, \mu_{i})}_\text{ $ =0$}- ~\framebox[1.0\width]{$~ w_{13} m_{w}(G \setminus \{1, 3\}, \mu_{i})$.~}$$ It easily follows that $m_{w}(G, x)$ has a real root in each interval $(\mu_1, \mu_2), (\mu_2, \mu_3),$ and $(\mu_{3}, \mu_{4})$, which gives rise to the simple roots, $\lambda_2, \lambda_3,$ and $ \lambda_4$. Thus, we may conclude that the roots of $m_{w}(G, x)$ strictly interlace the roots of $m_{w}(G, x)$. Hence, $G$ is ${\rm SRSI}(1)$.
\end{ex}

The remainder of this section is divided into three subsections. First, we define a specialized graph operation (called expansion) and use it to construct graphs that are SRSI$_w(v)$ for a particular vertex $v$. Second, by extending a known result of L. Duarte on interlaced spectral data for trees, we provide a characterization of all SRSI$_w(v)$ graphs using this graph expansion operation. Finally, we provide a characterization of all SRSI trees.

\subsection{Characterizing all SRSI$_w(v)$ Graphs}

It is evident that any graph $G=(V,E)$ for which all roots of $m_w(G,x)$ are simple must possess a perfect matching (if $|V|$ is even) or a nearly perfect matching (if $|V|$ is odd). In this section, we verify that, in fact, the converse also holds. Namely, for any graph with a perfect or a nearly perfect matching, there exists an edge weighting $w$ and a vertex $v \in V$ such that $G$ is SRSI$_w(v)$. Thus, a primary focus in this subsection is graphs with perfect or a nearly perfect matchings. 

In \cite{MN}, it is shown that for any tree $T$ with a perfect matching, there exists a pendant vertex $v$ incident to a vertex $u$ of degree two, such that the tree $T\setminus \{u,v\}$ also has a perfect matching. In fact, more can be said about trees with perfect or nearly perfect matchings, which we organize into the following remark.

\begin{rem} $\,$ \\
\label{match-rem}
\vspace*{-0.5cm}
\begin{enumerate}
    \item Suppose $T$ admits a perfect matching. Then for any vertex $v$ of $T$ let $T \setminus \{v\}$ have components $C_1, \ldots, C_k$. From the proof of Lemma 3 it follows that all but one of the components  $C_1, \ldots, C_k$ have a perfect matching while the remaining component possess a nearly perfect matching.
    \item Suppose $T$ admits a nearly perfect matching. Then using Lemma 3, we know there exists an unsaturated pendant vertex, and the sub-tree obtained by removing this pendant vertex has a perfect matching. In fact, let $v$ be any unsaturated vertex associated with a nearly perfect matching of $T$. Then assume that $C_1, \ldots, C_k$ are the components of $T \setminus \{v\}$. It is easy to check that all the components $C_1, \ldots, C_k$ possess perfect matching.
\end{enumerate}
\end{rem}

\subsection{Duarte Property: Extensions to the Hollow Case}
In 1989 Duarte \cite{Leal} studied a particular inverse eigenvalue problem for trees. To state this result, we need some additional notation. To any graph $G=(V,E)$ on $n$ vertices, we can associate a class of symmetric matrices, $S(G)$ consisting of all $n \times n$ symmetric matrices $A=[a_{ij}]$ where $a_{ij} \neq 0$ whenever $i\neq j$ and $i$ and $j$ are adjacent in $G$. Note that $S^o_+(G)$ is a subset of $S(G)$. To any $n \times n$ matrix $A$, we let $\sigma(A)$ denote the eigenvalues of $A$, and let $A(i)$ denote the $(n-1) \times (n-1)$ principal submatrix obtained from $A$ by deleting the row and column $i$. The following result was proved in \cite{Leal}

\begin{thm}\cite{Leal}
\label{leal} Suppose that two strictly increasing sequences of real numbers $\stackrel{\rightarrow}{\lambda}$ of length $n$ and the other $\stackrel{\rightarrow}{\mu}$ of length $n-1$ are given, such that   $\stackrel{\rightarrow}{\mu}$ strictly interlaces $\stackrel{\rightarrow}{\lambda}$. Then, for any tree $T$, there exists a matrix $A \in S(T)$, $v \in T$, such that $\sigma(A) = \stackrel{\rightarrow}{\lambda}$ and 
$\sigma(A(v)) = \stackrel{\rightarrow}{\mu}$. 
\end{thm}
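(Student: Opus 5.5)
The plan is to prove the stronger statement that the vertex $v$ may be chosen arbitrarily, by induction on $n=|T|$ with the tree rooted at $v$. For $n=1$ the matrix $A=[\lambda_1]$ works. For $n\ge 2$, let $u_1,\ldots,u_k$ be the neighbours of $v$, and let $T_1,\ldots,T_k$ be the components of $T\setminus\{v\}$, with $u_i\in T_i$ and $n_i=|T_i|$, so that $\sum_i n_i=n-1$. Write
\[
f(x)=\prod_{j=1}^n (x-\lambda_j),\qquad g(x)=\prod_{j=1}^{n-1}(x-\mu_j).
\]

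First I need the expansion of $\det(xI-A)$ along row and column $v$ for $A\in S(T)$. Let $A_i=A[T_i]$, put $g_i=\det(xI-A_i)$ and $\tilde g_i=\det(xI-A_i(u_i))$. Since $T$ is a tree (the same cofactor computation as in Proposition \ref{bridges}(3)),
\[
\det(xI-A)=(x-a_{vv})\prod_i g_i(x)-\sum_{i=1}^k a_{vu_i}^2\,\tilde g_i(x)\prod_{j\ne i} g_j(x).
\]
Also $\det(xI-A(v))=\prod_i g_i$. Dividing by $\prod_i g_i$, the requirements become $\prod_i g_i=g$ and
\[
\frac{f(x)}{g(x)}=x-a_{vv}-\sum_{i=1}^k a_{vu_i}^2\,\frac{\tilde g_i(x)}{g_i(x)}.
\]

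Second, I decompose $f/g$. Because $f$ is monic of degree $n$ and $g$ is monic of degree $n-1$, partial fractions give
\[
\frac{f(x)}{g(x)}=x-c-\sum_{j=1}^{n-1}\frac{r_j}{x-\mu_j},\qquad r_j=-\frac{f(\mu_j)}{g'(\mu_j)}.
\]
The key sign check uses strict interlacing, $\lambda_j<\mu_j<\lambda_{j+1}$. Exactly $n-j$ roots of $f$ exceed $\mu_j$, so $\operatorname{sign} f(\mu_j)=(-1)^{n-j}$. Exactly $n-1-j$ roots of $g$ exceed $\mu_j$, so $\operatorname{sign} g'(\mu_j)=(-1)^{n-1-j}$. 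Hence every $r_j>0$.

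Third, I split the $\mu_j$ among the components. Partition $\{\mu_1,\ldots,\mu_{n-1}\}$ arbitrarily into sets $M_1,\ldots,M_k$ with $|M_i|=n_i$, and set $g_i=\prod_{\mu\in M_i}(x-\mu)$, so that $\prod_i g_i=g$. Then choose
\[
a_{vv}=c,\qquad a_{vu_i}=\sqrt{R_i},\quad R_i=\sum_{\mu_j\in M_i} r_j>0,
\]
and define
\[
\tilde g_i(x)=\frac{g_i(x)}{R_i}\sum_{\mu_j\in M_i}\frac{r_j}{x-\mu_j}.
\]
This $\tilde g_i$ is a monic polynomial of degree $n_i-1$, and the displayed identity for $f/g$ holds term by term.

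The step I expect to be the main obstacle, although it is standard, is checking that the roots of $\tilde g_i$ strictly interlace $M_i$. The rational function $\sum_{\mu_j\in M_i} r_j/(x-\mu_j)$ has all $r_j>0$, so it is strictly decreasing on each interval between consecutive poles and runs from $+\infty$ to $-\infty$ there. It therefore has exactly one zero in each of the $n_i-1$ gaps of $M_i$, and these account for all roots of $\tilde g_i$.

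Finally, I apply the induction hypothesis to $T_i$ rooted at $u_i$ with data $(M_i,\ \text{roots of }\tilde g_i)$. This gives $A_i\in S(T_i)$ with $\sigma(A_i)=M_i$ and $\sigma(A_i(u_i))=\rho(\tilde g_i)$; when $n_i=1$ the base case applies directly. Assembling the $A_i$ with diagonal entry $c$ at $v$ and the nonzero entries $a_{vu_i}=\sqrt{R_i}$ gives $A\in S(T)$. The expansion then yields $\det(xI-A)=f$ and $\det(xI-A(v))=g$, as required.
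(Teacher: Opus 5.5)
Your proof is correct and is essentially Duarte's original argument, which the paper cites rather than proves but paraphrases in the proof of Theorem~\ref{H-Leal}: write $f/g=(x-a)-\sum_k y_k h_k/g_k$ with $y_k>0$ and the roots of each $h_k$ strictly interlacing those of $g_k$ over the branches at $v$, recurse on the branches, and place $\sqrt{y_k}$ in row and column $v$ with $a$ on the diagonal. Your version lets $v$ be arbitrary and groups the positive residues $r_j$ by an arbitrary partition of the $\mu_j$; this is the stronger form of the theorem that Duarte actually proved.
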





We wish to extend Theorem \ref{leal}  to hollow matrices. Recall that a matrix $A=[a_{ij}]$ is called {\em hollow} if $a_{ii}=0$ for all $i=1,2,\ldots, n$. An increasing sequence $\stackrel{\rightarrow}{\lambda}$ of real numbers is {\em symmetric about} 0 if $\lambda_{i} = -\lambda_{n-i+1}$ for all $i=1,2,\ldots, n$. We now prove the following Duarte-type theorem concerning an inverse eigenvalue problem for the case where hollow matrices admit distinct spectra.

\begin{thm}
Suppose $T$ is a tree on $n$ vertices that possesses a perfect or nearly perfect matching. Let $\stackrel{\rightarrow}{\lambda}$ be a real sequence of length $n$ and $\stackrel{\rightarrow}{\mu}$ a real sequence of length $n-1$, both consisting of distinct real numbers symmetric about 0 such that  $\stackrel{\rightarrow}{\mu}$ strictly interlaces $\stackrel{\rightarrow}{\lambda}$. Then there exists a hollow matrix $A \in S(T)$ and a vertex $v \in T$ such that  $\sigma(A) = \stackrel{\rightarrow}{\lambda}$ and 
$\sigma(A(v)) = \stackrel{\rightarrow}{\mu}$.
In fact, $A$ can be chosen in $S^o_+(G)$.
\label{H-Leal}
\end{thm}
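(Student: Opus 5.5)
We will argue by strong induction on $n$, following Duarte's recursive construction but carrying the symmetry about $0$ through every step. The basic tool is the bridge identity for a vertex $v$ with neighbours $v_1,\dots,v_s$. For any $A\in S(T)$ we write $p_A(x)=\det(xI-A)$. Expanding along row $v$ of a hollow matrix on a tree gives
\[ p_A(x)=x\,p_{A(v)}(x)-\sum_{i=1}^{s} a_{vv_i}^2\, p_{A_i(v_i)}(x)\prod_{j\neq i}p_{A_j}(x), \]
where $A_1,\dots,A_s$ are the diagonal blocks of $A(v)$ corresponding to the components $C_1,\dots,C_s$ of $T\setminus\{v\}$ (with $v_i\in C_i$). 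This is item (3) of Proposition \ref{bridges} translated through Proposition \ref{mw-tree}.

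\textbf{Choice of $v$.} If $n$ is odd (nearly perfect matching), take $v$ unsaturated. By Remark \ref{match-rem}(2), every $C_i$ then has a perfect matching. If $n$ is even, take $v$ as in Remark \ref{match-rem}(1), so that every $C_i$ has a perfect matching except exactly one, which has a nearly perfect matching.

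\textbf{Splitting the spectra.} Partition $\stackrel{\rightarrow}{\mu}$ into sets $M_i$ with $|M_i|=|C_i|$, each symmetric about $0$. Since $\stackrel{\rightarrow}{\mu}$ is symmetric with distinct entries, it contains $0$ exactly when $n-1$ is odd, i.e.\ $n$ even. In that case $0$ goes to the unique odd component, and the nonzero pairs $\pm\mu$ are distributed arbitrarily to fill the sizes. Set $g(x)=\prod_{\mu\in\stackrel{\rightarrow}{\mu}}(x-\mu)$ and $f(x)=\prod(x-\lambda_k)$. Strict interlacing gives the partial fraction expansion
\[ \frac{x\,g(x)-f(x)}{g(x)}=\sum_{\mu}\frac{c_\mu}{x-\mu},\qquad c_\mu=-\frac{f(\mu)}{g'(\mu)}>0. \]
Here the numerator has degree $\le n-2$ because both $f$ and $xg$ are monic of degree $n$ with vanishing $x^{n-1}$ coefficient, by symmetry. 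The symmetry of $f$ and $g$ also gives $c_{-\mu}=c_\mu$.

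\textbf{Construction.} For each $i$, define $h_i(x)=\frac{1}{\gamma_i}\sum_{\mu\in M_i}c_\mu\prod_{\nu\in M_i\setminus\{\mu\}}(x-\nu)$, where $\gamma_i=\sum_{\mu\in M_i}c_\mu$ normalizes $h_i$ to be monic. Then $h_i$ has degree $|C_i|-1$, and its roots strictly interlace $M_i$ because all $c_\mu>0$. The pairing $c_{-\mu}=c_\mu$ makes $h_i$ even or odd, so its roots are symmetric about $0$. We apply the induction hypothesis to $C_i$ with spectrum $M_i$ and deleted spectrum $\rho(h_i)$. This is the main obstacle: the hypothesis must supply a hollow $A_i\in S^o_+(C_i)$ with $\sigma(A_i(v_i))=\rho(h_i)$ at a \emph{prescribed} vertex $v_i$, the neighbour of $v$, whereas the statement only promises \emph{some} vertex. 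So the induction really has to be run for a strengthened statement: for a tree with a perfect or nearly perfect matching and a suitable vertex $u$ (one satisfying the conditions of Remark \ref{match-rem}, or more generally one whose deletion leaves a matching structure of the right parity), the realization exists with $A(u)$ having the prescribed spectrum. We must then check that each neighbour $v_i$ is an admissible root for $C_i$. The parities match: if $|C_i|$ is even, $h_i$ is odd of degree $|C_i|-1$ and its roots include $0$. The combinatorial compatibility of $v_i$ with the matching of $C_i$ is exactly what Remark \ref{match-rem} must provide, possibly after re-choosing $v$ to be a leaf or an unsaturated vertex so that the recursion closes. The base cases $n=1,2$ are trivial.

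\textbf{Assembly.} Set $a_{vv_i}=\sqrt{\gamma_i}>0$ and use the bridge expansion. Dividing it by $p_{A(v)}=g$ gives
\[ x-\frac{p_A(x)}{g(x)}=\sum_i \gamma_i\frac{h_i(x)}{\prod_{\mu\in M_i}(x-\mu)}=\sum_\mu\frac{c_\mu}{x-\mu}. \]
Hence $p_A=f$, and $A$ is hollow with positive off-diagonal entries on the edges, so $A\in S^o_+(T)$.
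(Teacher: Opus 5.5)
Your proposal follows the paper's proof. Both use Duarte's partial-fraction splitting of $f/g$, with the linear term forced to be $x$ by symmetry. Both choose $v$ via Remark~\ref{match-rem}, split the $\mu$'s symmetrically among the branches, induct on each branch with the neighbour of $v$ deleted, and border with $\sqrt{\gamma_i}$. Your explicit residues, with $c_\mu>0$ and $c_{-\mu}=c_\mu$, justify two things the paper only asserts: each $h_i$ strictly interlaces $M_i$, and its roots are symmetric about $0$.

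\textbf{The unfinished step.} As written, the step you hedge is left open, and the paper's proof passes over it silently. The theorem only produces \emph{some} vertex, but bordering needs $\sigma(A_i(v_i))=\rho(h_i)$ at the \emph{forced} neighbour $v_i$. Re-choosing $v$ is not available below the top level, since there $v_i$ is dictated by the previous step. It is also not needed. Prove instead the following prescribed-vertex statement:
(E) if $T$ has a perfect matching, the conclusion holds with $A(u)$ for every vertex $u$;
(O) if $T$ has a nearly perfect matching $M$ leaving $u$ unsaturated, the conclusion holds with $A(u)$.

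This closes with no choices:
\begin{itemize}
\item In case (O), every vertex $x\ne u$ is saturated by $M$, and its partner is a neighbour other than $u$, so it lies in the same component. Hence $M$ restricts to a perfect matching of each component $C_i$ of $T\setminus\{u\}$, and each $(C_i,v_i)$ is of type (E).
\item In case (E), let $\{u,v_j\}\in M$. Then $M$ restricts to a perfect matching of $C_i$ for $i\neq j$, which is type (E). On $C_j$ it restricts to a nearly perfect matching missing exactly $v_j$, which is type (O). Moreover, $C_j$ is exactly the odd component to which your splitting sends $0$.
\end{itemize}

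The base cases are $n=1$ (type (O)) and $n=2$ (type (E)). Taking any vertex when $n$ is even and an unsaturated one when $n$ is odd then completes your proof. The restriction in (O) cannot be dropped: for the centre $u$ of $P_3$, $A(u)$ is the $2\times 2$ zero matrix, so $0$ would be a double eigenvalue of $A(u)$.
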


\begin{proof}
    The proof uses induction on the number of vertices, $n$, in the tree $T$. For $n=2$ or $3$, the tree $T$ is simply a path, and from \cite{AG} this result follows for Jacobi matrices. Assume that the result holds for all such trees on fewer than $n$ vertices, and let $T$ be a tree as assumed in the theorem above.

Let $\stackrel{\rightarrow}{\lambda} =(\lambda_1, \lambda_2, \ldots, \lambda_n)$  and let $\stackrel{\rightarrow}{\mu}=(\mu_1, \mu_2, \ldots, \mu_{n-1})$. In addition, we denote by
\[ f(x)= \prod_{i=1}^n (x-\lambda_i), ~~~ g(x)= \prod_{j=1}^{n-1} (x-\mu_i),\] polynomials of degrees $n$ and $n-1$, respectively. 
According to the proof of the main theorem in \cite{Leal}, we can write
\[ \frac{f(x)}{g(x)} = (x-a) - \sum_{k=1}^{m} y_k \frac{h_k(x)}{g_k(x)},\] where $a,y_1, \ldots y_m$ are unique real numbers with $y_k>0$ for all $k$, and $h_1, \ldots h_m$ are unique monic polynomials with $\deg(h_k) < \deg(g_k)$  and $g(x) = \prod g_k(x)$. It is not difficult to verify that, via an application of the division algorithm, if $f$ is written as $f(x)=xg(x)+h(x)$. Then the roots of the remainder, $h(x)$ are also symmetric about 0. Hence, it follows that $a=0$. Furthermore, in \cite{Leal} it was shown that the roots of $h_k$ and $g_k$ are strictly interlaced. Since $T$ is assumed to have a perfect or nearly perfect matching, applying Remark \ref{match-rem},  we know there exists a (cut) vertex $v$ in $T$ such that each component of $T \setminus \{v\}$ has a perfect or nearly perfect matching, with at most one component possessing a nearly perfect matching. Assume that the components (or branches) of $T \setminus \{v\}$ are denoted by $B_1, \ldots B_m$ and factor  $g(x) = \prod g_k(x)$, such that each $g_k$ has distinct roots symmetric about 0. Then by induction, for each $k=1,2,\ldots, m$, we can find matrices symmetric hollow matrices $A_k$ with characteristic polynomial $g_k$ and for which $h_k(x)$ is a polynomial corresponding to the branch $B_k$ with the neighbour of $v$ removed, so by construction $h_k$ will have roots symmetric about 0. Finally, following the description of the entries in row/column $v$, namely $\sqrt{y_k}$ and setting $a=0$, we will have constructed a symmetric hollow matrix $A$ in $S(T)$ with  characteristic polynomial $f(x)$ and where $A(v)$ has characteristic polynomial $g(x)$.
\end{proof}




Since our focus is on graphs whose weighted matching polynomial admit distinct roots, we must consider graphs with a perfect matching or a nearly perfect matching. The next result is almost certainly known and is not difficult to establish, but is needed for our main characterization of connected SRSI$_w(v)$ graphs. We note here that these results naturally extend to the disconnected case, but we omit the proof details here.

\begin{lem} \label{pm-npm}
    Suppose $G$ is a connected graph that has a perfect matching or a nearly prefect matching, respectively. Then there exists a spanning tree $T$ of $G$ that has the same perfect or the same nearly perfect matching, respectively.
\end{lem}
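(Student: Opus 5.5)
The plan is a greedy extension of the matching to a spanning tree. Let $M$ be the given perfect or nearly perfect matching of the connected graph $G$. Let $F$ be the spanning subgraph of $G$ with vertex set $V$ and edge set $M$. Since $M$ is a set of pairwise disjoint edges, $F$ is a forest. Its components are the edges of $M$ (copies of $K_2$), together with at most one isolated vertex in the nearly perfect case. So $F$ is an acyclic spanning subgraph that contains every edge of $M$, and it remains to enlarge it to a spanning tree without discarding any edge of $M$.

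For this I would use the standard argument that any forest in a connected graph extends to a spanning tree. While $F$ has more than one component, pick a component $C$. Because $G$ is connected, there is an edge $e=\{a,b\}\in E$ with $a\in C$ and $b\notin C$. Adding $e$ to $F$ joins two distinct components. It therefore creates no cycle, and it lowers the number of components by one. After finitely many steps, in fact exactly $|V|-1-|M|$ additions, we obtain a connected acyclic spanning subgraph $T$ of $G$, that is, a spanning tree. Every edge of $M$ is still present, since edges are only ever added and never removed.

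Finally, I would check that $M$ is still a perfect or nearly perfect matching of $T$. Being a matching depends only on $M$ consisting of pairwise disjoint edges, and these edges now all lie in $T$. The set of saturated vertices is unchanged, because $T$ and $G$ have the same vertex set. Hence $M$ saturates all of $V(T)$, or all but one vertex of $V(T)$, exactly as it did in $G$.

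There is no real obstacle here. The only point needing care is that the matching edges themselves form an acyclic starting set, which holds because they are disjoint. Equivalently, one could run Kruskal's algorithm with the edges of $M$ listed first. As the paper notes, the disconnected case follows by applying this argument to each component separately, giving a spanning forest.
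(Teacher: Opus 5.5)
Your proof is correct and complete. The edges of $M$ form an acyclic spanning subgraph, and repeatedly adding an edge between two distinct components (which exists because $G$ is connected) yields a spanning tree that contains $M$ and saturates the same vertices. The paper itself gives no proof of this lemma, calling it known and not difficult, and your forest-extension argument (equivalently, Kruskal's algorithm with the edges of $M$ listed first) is the standard way to establish it.
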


We are now in a position to characterize all graphs that have the SRSI$_w(v)$ property.

\begin{thm} \label{srsi-thm}
    Suppose $G$ is a connected graph that has a perfect matching or a nearly perfect matching, respectively. Then, there exists an edge weighting $w$ and a vertex $v$ such that $G$ is SRSI$_w(v)$. 
\end{thm}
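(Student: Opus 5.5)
Since $G$ is connected and has a perfect (or nearly perfect) matching, Lemma \ref{pm-npm} gives a spanning tree $T$ of $G$ with the same perfect (or nearly perfect) matching. The idea is to realize strict interlacing on $T$ via Theorem \ref{H-Leal}, and then transfer it to $G$ by giving the edges of $E(G)\setminus E(T)$ very small positive weights.

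\textbf{Step 1: strict interlacing on $T$.} Choose any strictly increasing sequence $\stackrel{\rightarrow}{\lambda}$ of $n$ distinct reals symmetric about $0$. Choose a sequence $\stackrel{\rightarrow}{\mu}$ of $n-1$ distinct reals symmetric about $0$ that strictly interlaces it; for instance, take midpoints $\mu_i=(\lambda_i+\lambda_{i+1})/2$, which are symmetric about $0$. Theorem \ref{H-Leal} then provides $A\in S^o_+(T)$ and a vertex $v$ with $\sigma(A)=\stackrel{\rightarrow}{\lambda}$ and $\sigma(A(v))=\stackrel{\rightarrow}{\mu}$. Define $w_0(e)=a_{ij}^2>0$ for each edge $e=\{i,j\}$ of $T$. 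By Proposition \ref{mw-tree}, $m_{w_0}(T,x)=\det(xI-A)$. Moreover $T\setminus\{v\}$ is a forest whose associated matrix is $A(v)$, so by the forest version of Proposition \ref{mw-tree}, $m_{w_0}(T\setminus\{v\},x)=\det(xI-A(v))$. Hence $T$ is SRSI$_{w_0}(v)$. One caveat: the theorem only asserts $A\in S(T)$ with the $S^o_+$ refinement; if some entries come out negative, flip signs by a diagonal $\pm1$ similarity, which is possible since $T$ is a tree and does not change either spectrum.

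\textbf{Step 2: perturbation to $G$.} For $\varepsilon>0$, let $w_\varepsilon$ equal $w_0$ on $E(T)$ and $\varepsilon$ on the remaining edges of $G$. The coefficients $\mu_{w_\varepsilon}(G,k)$ are polynomials in $\varepsilon$ that reduce to $\mu_{w_0}(T,k)$ at $\varepsilon=0$. Thus $m_{w_\varepsilon}(G,x)\to m_{w_0}(T,x)$ and $m_{w_\varepsilon}(G\setminus\{v\},x)\to m_{w_0}(T\setminus\{v\},x)$ coefficientwise, with degrees fixed since both polynomials are monic. Roots depend continuously on coefficients, and the limiting $2n-1$ roots are all distinct and strictly interlaced. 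Every root is real by Theorem \ref{realroots}, so no root can leave the real line. Therefore, for sufficiently small $\varepsilon$, each root of the perturbed polynomials stays in a small disjoint interval around its limit. Both simplicity and strict interlacing persist, and $G$ is SRSI$_{w_\varepsilon}(v)$.

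\textbf{Main obstacle.} The delicate point is Step 1: we must ensure that Theorem \ref{H-Leal} genuinely yields a matrix with positive off-diagonal entries on exactly the edges of $T$, with zero diagonal, and that $A(v)$ corresponds to the vertex-deleted forest. That is what makes the matching-polynomial identity of Proposition \ref{mw-tree} apply to both $A$ and $A(v)$. The perturbation argument in Step 2 is routine by continuity of roots.
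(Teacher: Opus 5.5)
Your proposal is correct and follows essentially the same route as the paper's proof. You pass to a spanning tree $T$ with the same (nearly) perfect matching via Lemma~\ref{pm-npm}, realize distinct, symmetric, strictly interlaced spectra on $T$ and $T\setminus\{v\}$ through Theorem~\ref{H-Leal} and Proposition~\ref{mw-tree}, and then give the edges of $E(G)\setminus E(T)$ a small weight $\varepsilon>0$ and use continuity of roots; beyond that you only spell out what the paper leaves implicit (a concrete interlacing sequence, the sign normalization on a tree, the continuity details, and a uniform treatment of the nearly perfect case that the paper omits).
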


\begin{proof}
Assume $G$ has a perfect matching. Then, by Lemma \ref{pm-npm}, there exists a spanning tree $T$ of $G$ that has the same perfect matching. Thus, using Theorem \ref{H-Leal}, there exists a hollow matrix $A$ in $S(T)$ such that 
  $\sigma(A) = \stackrel{\rightarrow}{\lambda}$ and 
$\sigma(A(v)) = \stackrel{\rightarrow}{\mu}$, where  $\stackrel{\rightarrow}{\lambda}$ is a real sequence of length $n$ and $\stackrel{\rightarrow}{\mu}$ is a real sequence of length $n-1$, both consisting of distinct real numbers symmetric about 0 such that  $\stackrel{\rightarrow}{\mu}$ strictly interlaces $\stackrel{\rightarrow}{\lambda}$. We may assume that $A \in S^o_+(T)$, since $T$ is a tree. By Proposition 
\ref{mw-tree}, it follows that there exists an edge weighting $w$ of $T$ such that the roots of $m_{w}(T,x)$ are $\stackrel{\rightarrow}{\lambda}$ and the roots of $m_{w}(T \setminus \{v\},x)$ are $\stackrel{\rightarrow}{\mu}$. Assign the weight of $\varepsilon>0$ to each edge in $E(G) \setminus E(H)$. Then for $\varepsilon >0$ sufficiently small, the roots of $m_w(G,x)$ are distinct and strictly interlace the distinct roots of $m_w(G\setminus \{v\},x)$. Hence $G$ is SRSI$_w(v)$. A similar argument, omitted here, can be applied when $G$ has a nearly perfect matching $M$, with $v$ being an unsaturated vertex with respect to $M$.
\end{proof}

\subsection{Constructing SRSI$_w(v)$ Graphs}

It turns out that we can use existing graphs that are ${\rm SRSI}_w(u)$ with respect to some vertex $u$ to create new graphs that are ${\rm SRSI}_w(v)$ where $v$ is a newly added neighbour of $u$. Vertices that are added in this way are necessarily adjacent to a ${\rm SRSI}_w$ vertex $u$ in the new graph, but can also have additional neighbours. This graph operation has the following definition and notation. 

\begin{defn}
Suppose $\widehat{G} = (\widehat{V},\widehat{E}, \widehat{w})$ is a graph. If $u \in \widehat{V}$, then $G = \widehat{G} \underset{u} {\Cup} \{v\}$ is the weighted graph defined by the vertex set, $V = \widehat{V} \cup \{v\}$, where $u \sim v$ and edge set, $E = \widehat{E} \cup \{\hat{e}_1, \hat{e}_2, \ldots, \hat{e}_s\}$, where $\hat{e}_1 =\{u,v\}$, $\hat{e}_i =\{v, v_i \}$ for some $v_i \neq u $ in $\widehat{V}$, and weight function, $w|_{\widehat{V}} =\widehat{w}$. The edges, $\hat{e}_i =\{v, v_i \}$, can have any arbitrary weights. This operation is called \emph{graph expansion}, \label{expansiondef} and $\widehat{G}$ is said to be expanded at $u$ by $v$ with respect to the weight function, $\widehat{w}$.  
\end{defn} 

\begin{figure}[!htb]
\centering
\includegraphics[scale=.9]{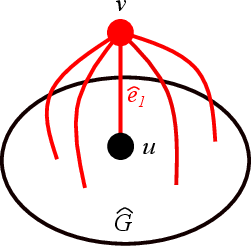} 
\caption{The expansion of $\widehat{G}$ at $u$ by $v$.} 
\label{GExpansion}
\end{figure} 

\begin{thm} 
Let $\widehat{G} = (\widehat{V},\widehat{E},\widehat{w})$ be a graph with a fixed weight function, $\widehat{w}$, which is ${ \rm SRSI}_{\widehat{w}}(u)$ for some vertex, $u$, in $\widehat{V}$. Then, the weighted graph, $G = \widehat{G} \underset{u}{\Cup} \{v\}$, is ${ \rm SRSI}_{w}(v)$.  
\label{SRSIv} 
\end{thm}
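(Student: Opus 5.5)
The plan is to reduce everything to sign information at the roots of $m_{\widehat{w}}(\widehat{G},x)$. Observe first that $G\setminus\{v\}=\widehat{G}$ with weights $\widehat{w}$, so by hypothesis $m_w(G\setminus\{v\},x)=m_{\widehat{w}}(\widehat{G},x)$ has simple roots $\mu_1<\cdots<\mu_{n}$, where $|\widehat{V}|=n$. It then suffices to show two things: $m_w(G,x)$, of degree $n+1$, takes nonzero values of alternating sign at $\mu_1,\dots,\mu_n$, and it has a root to the left of $\mu_1$ and a root to the right of $\mu_n$. The intermediate value theorem then gives $n+1$ distinct roots that strictly interlace the $\mu_i$.

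\textbf{Step 1 (bridge formula).} Apply item (3) of Proposition~\ref{bridges} at the vertex $v$, whose neighbours are $u=v_1,v_2,\dots,v_s$:
\[
m_w(G,x)=x\,m_{\widehat{w}}(\widehat{G},x)-\sum_{i=1}^{s} w_{vv_i}\, m_{\widehat{w}}(\widehat{G}\setminus\{v_i\},x).
\]
At $x=\mu_j$ the first term vanishes, so
\[
m_w(G,\mu_j)=-\sum_i w_{vv_i}\,m_{\widehat{w}}(\widehat{G}\setminus\{v_i\},\mu_j).
\]

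\textbf{Step 2 (sign control; the main obstacle).} We need to know the sign of $m_{\widehat{w}}(\widehat{G}\setminus\{a\},\mu_j)$ for an arbitrary vertex $a$, not only for $a=u$. For this I would invoke Godsil's path-tree identity, in its weighted form: $m(\widehat{G}\setminus\{a\},x)/m(\widehat{G},x)=m(T_a\setminus\{a\},x)/m(T_a,x)$, where $T_a$ is the path tree of $\widehat{G}$ rooted at $a$ and each edge inherits its weight from the corresponding edge of $\widehat{G}$. By Proposition~\ref{mw-tree}, the right side equals the diagonal resolvent entry $((xI-A)^{-1})_{aa}=\sum_k c_k/(x-\theta_k)$ with all $c_k\ge 0$, for a symmetric matrix $A$ attached to $T_a$.

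Since $\mu_j$ is a simple root of $m(\widehat{G},x)$, the residue at $\mu_j$ gives
\[
m(\widehat{G}\setminus\{a\},\mu_j)\big/ m'(\widehat{G},\mu_j)\ \ge 0
\]
for every vertex $a$. For $a=u$ the inequality is strict, because strict interlacing in the SRSI$_{\widehat{w}}(u)$ hypothesis means $m(\widehat{G}\setminus\{u\},\mu_j)\neq0$. Since $w_{vu}>0$ and all $w_{vv_i}>0$, Step 1 yields
\[
\operatorname{sign} m_w(G,\mu_j)=-\operatorname{sign} m'(\widehat{G},\mu_j)\neq 0.
\]
If the path-tree identity is considered too heavy, a fallback is to check directly that the extra edges $\{v,v_i\}$ for $i\ge2$ cannot cancel the $u$-term. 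I expect the sign argument above to be the cleanest route, and the point needing care is justifying the weighted version of Godsil's identity.

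\textbf{Step 3 (counting roots).} Because the $\mu_j$ are simple, $m'(\widehat{G},\mu_j)$ alternates in sign, with $m'(\widehat{G},\mu_n)>0$. Hence $m_w(G,\mu_j)$ alternates in sign, and $m_w(G,\mu_n)<0$. Since $m_w(G,x)$ is monic of degree $n+1$, it is positive for large $x$, so it has a root in $(\mu_n,\infty)$. At the left end the sign of $m_w(G,\mu_1)$ is $(-1)^{n}$, opposite to the sign $(-1)^{n+1}$ of $m_w(G,x)$ as $x\to-\infty$, so there is a root in $(-\infty,\mu_1)$. Each interval $(\mu_j,\mu_{j+1})$ also contains a root. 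This accounts for all $n+1$ roots, which are therefore simple and strictly interlaced by $\mu_1,\dots,\mu_n$. Thus $G$ is SRSI$_w(v)$.
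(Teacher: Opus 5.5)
Your proof is correct and follows the same route as the paper's proof. Both apply the bridge formula at $v$ and evaluate at the simple roots $\mu_j$ of $m_{\widehat{w}}(\widehat{G},x)$. Both observe that the $u$-term is nonzero there and that the other neighbour terms are zero or of the same sign. Both finish with the intermediate value theorem; your Step 3 just makes explicit the outer roots that the paper handles by ``basic limit arguments''.

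The one place you work harder than necessary is Step 2. The paper gets the sign claim directly from Theorem~\ref{interlace}. The roots of $m_{\widehat{w}}(\widehat{G}\setminus\{a\},x)$ weakly interlace the simple roots $\mu_1<\cdots<\mu_n$. So if $m_{\widehat{w}}(\widehat{G}\setminus\{a\},\mu_j)\neq 0$, exactly $n-j$ of its roots exceed $\mu_j$, and its sign is $(-1)^{n-j}=\operatorname{sign} m'_{\widehat{w}}(\widehat{G},\mu_j)$. Your weighted path-tree/resolvent argument is valid, but it is essentially a proof of that interlacing theorem, so the point you flagged as the main obstacle is already covered by a result stated in the paper.
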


\begin{proof}
Suppose $\widehat{G} = (\widehat{V},\widehat{E},\widehat{w})$ is a graph with a fixed weight function, $\widehat{w}$, where $\widehat{G}$ is ${ \rm SRSI}_{\widehat{w}}(u)$ for some vertex, $u$. Consider the graph, $G = \widehat{G} \underset{u}{\Cup} \{v\}$. Using Proposition \ref{bridges}, 
\begin{align*}
 m_{w}(G,x) &= x \cdot m_{\widehat{w}}({G\setminus \{v\}},x) - \widehat{w}_{vu}m_{\widehat{w}}(G\setminus\{v, u\}, x) - \underset{ v_i \neq u}{\underset{v_i \sim v}{\sum}} \widehat{w}_{v_iv} m_{\widehat{w}}(G\setminus\{v, v_i\}, x). 
 \end{align*} 
 
Assume the collection of roots for $m_{w}(G,x)$ is $\{ \lambda_1, \lambda_2, \ldots, \lambda_n \}$. Similarly, we denote the collection of roots for $m_{\widehat{w}}(G \setminus \{v\},x)$ and $m_{\widehat{w}}(G \setminus \{v, u\},x)$ as $\{\mu_1, \mu_2, \ldots, \mu_{n-1} \}$ and $\{ \delta_1, \delta_2, \ldots, \delta_{n-2} \}$, respectively. Since $\widehat{G}$ is ${\rm SRSI}_{\widehat{w}}(u)$, the roots of $m_{\widehat{w}}(G \setminus \{v\},x)$ and $m_{\widehat{w}}(G \setminus \{v, u\},x)$ are simple, and the roots of $m_{\widehat{w}}(G \setminus \{v, u\},x)$ strictly interlace the roots of $m_{\widehat{w}}(G \setminus \{v\},x)$. Moreover, the roots of $m_{\widehat{w}}(G \setminus \{v, v_i\},x)$ interlace the roots of $m_{\widehat{w}}(G \setminus \{v\}, x)$ for all $v_i \in N_{G}(v) \setminus \{u\}$. When evaluating $m_{w}(G, x)$ at the root, $x = \mu_i$, $m_{\widehat{w}}(G \setminus \{ v\}, \mu_i) = 0$ and $m_{\widehat{w}}(G \setminus \{ v, u\}, \mu_i) = z$ where $z$ is a nonzero constant, and for every other neighbour, $v_i$, of $v$, we have $m_{\widehat{w}}(G \setminus \{ v, v_i\}, \mu_i) = \hat{z}$ where $\hat{z}$ is a non-positive or nonnegative constant which, when nonzero, has the same sign as $z$ by interlacing. It follows from the Intermediate Value Theorem, that  $m_{w}(G,x)$ has a real root in each interval $(\mu_1, \mu_2), (\mu_2, \mu_3),$ and $(\mu_{n-2}, \mu_{n-1})$, which gives rise to the simple roots, $\lambda_2, \lambda_3, \ldots, \lambda_{n-1}$. Applying basic limit arguments we conclude that all roots of $m_{w}(G,x)$ are real, and the roots of $m_{\widehat{w}}({G\setminus \{v\}},x)$ strictly interlace the roots of $m_{w}(G,x)$.
\end{proof}

It is known  that if $G=(V,E,w)$ is a graph with a Hamilton path, $P = v_1 v_2 \cdots v_n$, and any fixed weight function, $w$, the roots of $m_w(G, x)$ are simple, and the roots of $m_w(G \setminus \{v_1\}, x)$ (or $m_w(G \setminus \{v_n\}, x)$) strictly interlace the roots of $m_w(G, x)$. 
As a basic consequence, we can reach the same conclusion using the graph expansion operation defined above.

\begin{cor}
If $G$ is a graph with a Hamilton path, $P = v_1 v_2 \cdots v_n$, then $G$ is ${ \rm SRSI}(v_1)$ (respectively, ${ \rm SRSI}(v_n)$) for any weight function, $w$. 
\label{HamPath}
\end{cor}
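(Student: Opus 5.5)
We argue by induction on $n$, building $G$ from its Hamilton path by repeated graph expansion and applying Theorem \ref{SRSIv} at each step. Fix an arbitrary weight function $w$ on $G$. For $k=1,\ldots,n$ let $G_k=G[\{v_{n-k+1},\ldots,v_n\}]$, the subgraph induced by the last $k$ vertices of the path, with the weights inherited from $w$. Then $G_n=G$, and $G_k$ has the Hamilton path $v_{n-k+1}v_{n-k+2}\cdots v_n$. We claim that $G_k$ is SRSI$_w(v_{n-k+1})$ for every $k\ge 2$; taking $k=n$ gives SRSI$_w(v_1)$. The statement for $v_n$ follows by reversing the path.

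The base case is $k=2$, where $G_2$ is the single edge $v_{n-1}v_n$ with weight $c=w_{v_{n-1}v_n}>0$. Here $m_w(G_2,x)=x^2-c$ has the simple roots $\pm\sqrt c$, and $m_w(G_2\setminus\{v_{n-1}\},x)=x$ has the single root $0$, which lies strictly between them. So $G_2$ is SRSI$_w(v_{n-1})$.

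For the inductive step, suppose $G_k$ is SRSI$_w(v_{n-k+1})$ with $u=v_{n-k+1}$. Then $G_{k+1}$ is obtained from $\widehat G=G_k$ by adding the vertex $v=v_{n-k}$. This vertex is adjacent to $u$ through a path edge, and possibly also to further vertices $v_i$ of $G_k$ through the remaining edges of $G$ induced on the new vertex set, all carrying their $w$-weights. This is exactly the expansion $G_{k+1}=\widehat G\underset{u}{\Cup}\{v\}$. The weight function restricted to $\widehat V$ is the $w$ already used for $G_k$, and the new edges may carry arbitrary positive weights. Theorem \ref{SRSIv} then yields that $G_{k+1}$ is SRSI$_w(v_{n-k})$, which completes the induction. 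Since $w$ was arbitrary, $G$ is SRSI$(v_1)$.

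The only point needing care is that the expansion definition matches our situation exactly. It permits any set of additional edges from $v$ into $\widehat V$ with arbitrary weights, and it leaves $\widehat w$ unchanged on $\widehat G$, which is the induced subgraph carrying the restricted weighting. We also need the induced subgraphs $G_k$, rather than the path alone, so that every edge of $G$ is eventually included. Finally, the base case must start at $k=2$ because Definition \ref{srsi} requires $n\ge 2$.
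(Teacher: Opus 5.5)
Your proof is correct and follows the paper's intended argument: the paper presents the corollary as a direct consequence of Theorem \ref{SRSIv}, growing $G$ from a single seed edge by successive graph expansions at the current endpoint of the Hamilton path. You also supply details the paper leaves unstated, namely the explicit base case $G_2$ and the check that each induced subgraph $G_{k+1}$ is precisely the expansion of $G_k$ at $v_{n-k+1}$ by $v_{n-k}$.
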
 

Furthermore, it easily follows that if $G$ possesses a Hamilton cycle, then $G$ is a SRSI graph. In fact, this idea can be pushed further, for example, with the Petersen graph. While the Petersen graph does not having a Hamilton cycle, it does admit a Hamilton path initiated at any vertex. Thus, the Petersen graph is also a SRSI graph.

Naturally, if vertices can be added to SRSI$_w(u)$ graphs in a certain manner to create more SRSI$_w(v)$ graphs, we question whether we can remove vertices in a SRSI$_w(v)$ graph and always remain SRSI$_w(u)$ where $u$ is a neighbour of $v$. However, this unfortunately turns out to be false. The edge weighting $w$ plays a key role in determining whether such SRSI$_w(u)$ neighbours exist. To see this, consider the subgraph $G\setminus \{v\}$, which is isomorphic to $P_7$ in Figure \ref{P7Iso}. 

\begin{figure}[!htb]
\centering
\includegraphics[scale=.75]{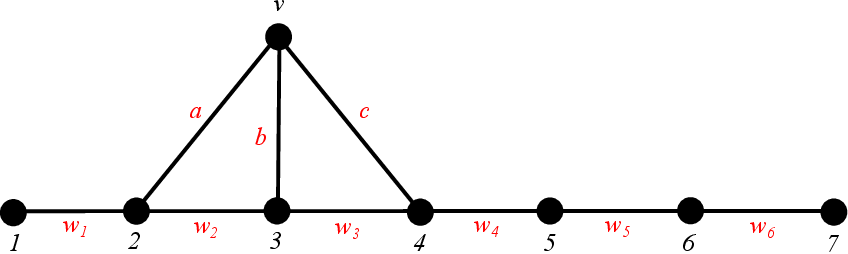} 
\caption{The converse of Theorem \ref{SRSIv} is false.} 
\label{P7Iso}
\end{figure} 

It follows that the matching polynomial associated with $G\setminus \{v\}$ is equal to
\begin{eqnarray*} m_w(G\setminus \{v\}, x) & = & x^7 -(w_1+w_2+w_3+w_4+w_5+w_6)x^5 \\
&&+\left(w_1(w_3+w_4+w_5+w_6)+w_2(w_4+w_5+w_6)+w_3(w_5+w_6)+w_4w_6\right)x^3 \\ &&-\left(w_1w_3(w_5+w_6) + w_1w_4w_6 + w_2w_4w_6\right)x. 
\end{eqnarray*}
We note here that since $G\setminus \{v\}$ has a Hamilton path, it is an SR graph for which  ~$m_w(G\setminus~\{v\}, x)$ always has zero as a root. Furthermore, it is not difficult to determine that for the graph, $G$, we have 
\begin{eqnarray*} m_w(G, x) & = & 
x\cdot m_w(G\setminus \{v\},x) - (a+b+c)x^6 +
\\ &&
\left(a(w_3+w_4+w_5+w_6)+
b(w_1+w_4+w_5+w_6) + c(w_1+w_2+w_5+w_6)\right)x^4 - \\ &&\hspace*{-2cm}\left(aw_3(w_5+w_6) + 
bw_1(w_4+w_5+w_6) + cw_1(w_5+w_6) +
cw_2(w_5+w_6) +aw_4w_6 + bw_4w_6
\right)x^2 \\ && + bw_1w_4w_6. 
\end{eqnarray*}

Again, since $G$ has a Hamilton path, $G$ is SR. Using the software package, Maple (\cite{maple}) to verify that these polynomials are relatively prime using the Euclidean Algorithm, it can be shown that there are no common roots between $m_w(G,x)$ and $m_w(G\setminus \{v\},x)$. Hence, it follows that $G$ is ${\rm SRSI}(v)$ for all weightings as in the figure above. Let $2, 3, 4$ be the neighbours of $v$ in $G$. In the graph $H=G \setminus \{v\}$, it is straightforward to note that $H$ is not ${ \rm SRSI}$ with respect to $2$ nor $4$ for any weighting since zero is a root of the corresponding matching polynomials of both $m_w(H\setminus \{2\},x)$
and $m_w(H\setminus \{4\},x)$, and zero is a root of $m_w(H,x)$. Regarding vertex $3$, we consider a particular weighting for the graph, $H\setminus \{3\}$; namely, weight the edge in the component, $H_1$, consisting of $K_2$ with weight equal to 1, and weight the edges of the component, $H_2=P_4$, with weights equal to 2, 1, and 2 from left to right, respectively. Then, it follows that $m_w(H_1,x)=x^2-1$ and $m_w(H_2,x)= x^4-5x^2+4$. It is easily verified that 1 is a root of both $m_w(H_1,x)$  and $m_w(H_2,x)$, so 1 is a double root of $m_w(H\setminus \{3\},x)$. Hence, $H\setminus \{3\}$ is not a SRSI$(u)$ graph with respect to all possible weightings.

The next result is a consequence of the work in the previous section, but also demonstrates that all SRSI$_w(v)$ graphs can be obtained from the graph expansion operation that was discussed in the previous subsection. Thus, it follows that all such SRSI$_w(v)$ graphs can be obtained from graph expansion by starting with a single edge which may be treated as a so-called ``seed" for constructing all connected SRSI$_w(v)$ graphs.

\begin{cor}
    Suppose $G$ is a connected graph that has a perfect matching or a nearly perfect matching, respectively. Then, there exists a vertex $v$ in $G$ such that $G = (G\setminus \{v\}) \underset{u}{\Cup}\{v\}$, where $u$ is a neighbour of $v$ in $G$, $w$ is an edge weighting of $G \setminus \{v\}$,  and $G \setminus \{v\}$ SRSI$_w(u)$. 
\end{cor}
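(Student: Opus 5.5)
The plan is to reduce the corollary to Theorem~\ref{srsi-thm} applied to the smaller graph $G\setminus\{v\}$, together with the observation that every graph is trivially an expansion of any vertex-deleted subgraph. The statement is essentially structural: we must find $v$ and a neighbour $u$ so that $H:=G\setminus\{v\}$ is connected, has a perfect or nearly perfect matching, and is SRSI$_w(u)$ for some weighting $w$. Once these hold, $G=H\underset{u}{\Cup}\{v\}$ holds by definition of expansion. The edge $\{u,v\}$ plays the role of $\hat e_1$, and all other edges at $v$ play the role of the $\hat e_i$, with arbitrary weights.

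\textbf{Step 1 (choice of $v$).} By Lemma~\ref{pm-npm}, take a spanning tree $T$ of $G$ carrying the perfect (resp.\ nearly perfect) matching $M$.

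In the nearly perfect case, I choose $v$ to be a pendant vertex of $T$ that is unsaturated by $M$; Remark~\ref{match-rem}(2) supplies such a vertex. Then $T\setminus\{v\}$ is a spanning tree of $H$, so $H$ is connected, and $M$ is a perfect matching of $H$.

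In the perfect case, I use the result of \cite{MN}: $T$ has a pendant vertex $v$ whose neighbour $u$ has degree two, with $T\setminus\{u,v\}$ having a perfect matching. Here I instead delete only a pendant vertex $v$ of $T$. Then $T\setminus\{v\}$ is a tree, so $H$ is connected. Moreover $M$ minus the edge covering $v$ is a nearly perfect matching of $H$.

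\textbf{Step 2 (the SRSI vertex $u$).} Apply Theorem~\ref{srsi-thm} to $H$. This yields a weighting $w$ and some vertex $u'$ with $H$ SRSI$_w(u')$. The difficulty is that $u'$ need not be adjacent to $v$. To force adjacency, I will reopen the proof of Theorem~\ref{H-Leal} rather than use it as a black box. That proof constructs $A$ by choosing $v$ as the root whose deletion splits the tree into branches with perfect or nearly perfect matchings. I will check that the rooting vertex can be taken to be the neighbour $u$ of $v$ in $T$, namely the vertex $u$ with $M$-edge $\{u,v\}$.

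\begin{itemize}
\item If $G$ has a perfect matching, then $u$ is unsaturated in $H$ by $M\setminus\{\{u,v\}\}$. Remark~\ref{match-rem}(2) then gives that all components of $T\setminus\{v,u\}$ have perfect matchings, so $u$ is a valid root for the Duarte-type induction.
\item If $G$ has a nearly perfect matching, then $H$ has a perfect matching. Remark~\ref{match-rem}(1) gives the required branch structure at any vertex, in particular at $u$.
\end{itemize}

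Hence Theorem~\ref{H-Leal}, applied with prescribed symmetric strictly interlacing data, gives $A\in S^o_+(T\setminus\{v\})$ with $\sigma(A)$ and $\sigma(A(u))$ simple and strictly interlaced. Proposition~\ref{mw-tree} converts $A$ into a weighting of the tree. Assigning small weights $\varepsilon$ to the remaining edges of $H$, as in the proof of Theorem~\ref{srsi-thm}, makes $H$ SRSI$_w(u)$.

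\textbf{Step 3 (assembly).} With $u\sim v$, the identity $G=H\underset{u}{\Cup}\{v\}$ holds. As a check, Theorem~\ref{SRSIv} shows $G$ is then SRSI$_w(v)$ for any weights on the new edges.

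\textbf{Main obstacle.} I expect the hardest part to be Step 2: guaranteeing that the SRSI vertex of $H$ may be chosen adjacent to $v$. This rests on the claim that the proof of Theorem~\ref{H-Leal} works with any root vertex whose removal leaves branches with perfect matchings, except possibly one branch with a nearly perfect matching. If that flexibility fails, I would instead try a perturbation argument to move the SRSI property to the desired vertex $u$.
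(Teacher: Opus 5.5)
Your proposal is correct and is essentially the paper's proof. You delete a suitable vertex $v$: a non-cut vertex of $G$ in the perfect case (your pendant vertex of the spanning tree $T$ is one such choice), and an unsaturated pendant vertex of $T$ in the nearly perfect case. You then make $H=G\setminus\{v\}$ SRSI$_w(u)$ at the neighbour $u$ via Theorem~\ref{srsi-thm}, and observe that $G=H\underset{u}{\Cup}\{v\}$.

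Your Step 2 makes explicit something the paper uses without stating it: the construction behind Theorems~\ref{H-Leal} and~\ref{srsi-thm} lets the SRSI vertex be prescribed, namely the unsaturated vertex, or any vertex when there is a perfect matching. One small slip: in the nearly perfect case you may first need to replace $M$ by another nearly perfect matching of $T$, shifting along an alternating path, so that the unsaturated vertex is pendant. This is exactly what the paper's ``if necessary'' does.
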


\begin{proof}
    First, assume that $G$ has a perfect matching $M$. Choose a  vertex $v$ in $G$ that is not a cut vertex of $G$. Assume the edge $\{v,u\}$ is in $M$. Set $H=G \setminus \{v\}$. Then, $H$ is connected graph with a nearly perfect matching $M' = M \setminus \{v,u\}$, and thus $u$ is unsaturated with respect to $M'$. Applying Theorem \ref{srsi-thm}, we know that there exists a weighting $w$ of $H$ so that $H$ is SRSI$_{w}(u)$. Furthermore, it is evident that $G= H  \underset{u}{\Cup}\{v\}$. Hence, using Theorem \ref{SRSIv} it follows that $G$ is SRSI$_{w'}(v)$ for some edge weighting $w'$ on the graph $G$ with $w'|_{H}=w$. 
    
    Next, assume that $G$ has a nearly perfect matching $M$. Then by Lemma \ref{pm-npm} $G$, has a spanning tree with the same nearly perfect matching. If necessary, we may assume that $T$ has a nearly perfect matching $M'$ with a pendant vertex of $T$ being unsaturated. Thus, $T\setminus \{v\}$ and hence $H=G\setminus \{v\}$ are connected and both graphs have perfect matchings. Hence, by Theorem \ref{srsi-thm}, there exists an edge weighting $w$ of $H$ so that $H$ is SRSI$_{w}(u)$ where $u$ is any vertex adjacent to $v$ in $T$ and hence $G$. As above, we have $G= H  \underset{u}{\Cup}\{v\}$. Thus, using Theorem \ref{SRSIv}, $G$ is SRSI$_{w'}(v)$ for some edge weighting $w'$ on the graph $G$ with $w'|_{H}=w$. This completes the proof.
    \end{proof}

\subsection{SRSI Trees}

As noted above, a graph $G$ is called SRSI if, for every edge weighting of $G$, $G$ has simple roots and the removal of any vertex in $G$ results in strict interlacing. Obviously, this property imposes significant restrictions on the graph itself; however, there are plenty of infinite families of SRSI graphs, including, for example, Hamiltonian graphs. In this section, we focus on trees that are SRSI. Recall that any graph which contains a Hamilton path is SRSI$(v)$ with respect to the end points of this path. We begin by analyzing paths. 

\begin{lem}
If $P_{2k+1}$ is a path on an odd number of vertices where $k \geq 1$, then $P_{2k+1}$ is not ${ \rm SRSI}$. 
\end{lem}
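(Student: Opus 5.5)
To show $P_{2k+1}$ is not SRSI, it suffices to exhibit one weighting and one vertex for which strict interlacing fails. SRSI requires the SRSI$_w(v)$ property for every weight function and every vertex $v$. The natural witness is a vertex whose deletion leaves a subgraph whose matching polynomial shares the root $0$ with $m_w(P_{2k+1},x)$.

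Label the path $v_1 v_2 \cdots v_{2k+1}$. Since $P_{2k+1}$ has an odd number of vertices, $m_w(P_{2k+1},x)$ has odd degree. By Theorem \ref{realroots} its roots are symmetric about $0$, so $0$ is a root. (Directly: only even powers $x^{n-2k}$ with $n$ odd appear, so every term is divisible by $x$.) This holds for every weighting $w$.

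Next delete the vertex $v_2$, chosen to have even index. Then $P_{2k+1}\setminus\{v_2\}$ is the disjoint union of $K_1=\{v_1\}$ and the path $v_3\cdots v_{2k+1}$ on $2k-1$ vertices. By item (1) of Proposition \ref{bridges},
\[ m_w(P_{2k+1}\setminus\{v_2\},x) = x\cdot m_w(P_{2k-1},x). \]
This has $0$ as a root; indeed for $k\ge 2$ it is a double root, since $m_w(P_{2k-1},x)$ also has odd degree. More generally, deleting any $v_{2j}$ leaves two odd paths, each contributing a factor of $x$.

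Either way, $0$ is a root of both $m_w(P_{2k+1},x)$ and $m_w(P_{2k+1}\setminus\{v_2\},x)$. Strict interlacing is therefore impossible, because strict interlacing forbids common roots. Hence $P_{2k+1}$ is not SRSI$_w(v_2)$ for any $w$, and in particular it is not SRSI.

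The only delicate point is the case $k=1$, where $P_3\setminus\{v_2\}=2K_1$ has polynomial $x^2$. There $0$ is a repeated root, and it is also shared with $m_w(P_3,x)=x(x^2-w_{12}-w_{23})$, so both conditions of Definition \ref{srsi} fail. No genuine obstacle remains; the argument is uniform in $k\ge1$.
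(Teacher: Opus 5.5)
Your proof is correct, but it takes a different route from the paper. The paper deletes the middle vertex $v_{k+1}$, so that $P_{2k+1}\setminus\{v_{k+1}\}=P_k\cup P_k$. It then chooses a weighting symmetric about the middle, which makes the vertex-deleted polynomial equal to $m_w(P_k,x)^2$; every root is repeated, so the simple-roots requirement fails. You instead delete an even-indexed vertex and argue by parity. The root $0$ of $m_w(P_{2k+1},x)$ is present for every $w$. Moreover, $P_{2k+1}\setminus\{v_{2j}\}$ is a disjoint union of two odd-order paths (one possibly $K_1$), so it has no perfect matching, and $0$ becomes a common (indeed double) root.

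Your route is weight-independent and therefore proves more: $v_{2j}$ is not an SRSI$_w$ vertex for \emph{any} $w$. The same reasoning also shows that in any graph $G$ of odd order, an SRSI$_w$ vertex $v$ must have $G\setminus\{v\}$ possessing a perfect matching. By contrast, when $k$ is even the paper's middle-vertex argument genuinely needs its symmetric weighting. In that case both halves have perfect matchings, and for generic weights the middle vertex actually is an SRSI$_w$ vertex. For example, in $P_5$ this holds whenever $w_{12}\neq w_{45}$.

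What the paper's approach buys is uniformity with the next lemma. For $P_{2k}$ the root $0$ is never available, so one must engineer a repeated root by choosing weights, exactly as is done there.

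One small slip: in your parenthetical, the exponents $n-2k$ with $n$ odd are odd, not even. Your conclusion that $x$ divides every term is still right.
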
 

\begin{proof}
Let $P_{2k+1}$ be the path on an odd number of vertices with $k \geq 1$, and suppose that $v$ is the vertex in the middle of $P_{2k+1}$ (that is, $v$ is the vertex of equal distance to the two pendant vertices). Since $P_{2k+1}$ has an odd number of vertices, $0 \in \rho(m_w(P_{2k+1}, x))$, and $P_{2k+1}$ is an ${ \rm SR}$ graph. Consider $P_{2k+1} \setminus \{v\}$. The removal of $v$ results in two paths of length $k$, so $P_{2k+1} \setminus \{v\} = P_k \cup P_k$. Hence, if the weight function, $w$, assigns the same weights to the isomorphic edges of $P_k \cup P_k$, then it follows that all roots of $P_k \cup P_k$  are double roots. Hence $P_{2k+1} \setminus \{v\}$ is not ${\rm SR}$, which means that $P_{2k+1}$ is not ${\rm SRSI}$. 
\end{proof} 

\begin{lem}
If $P_{2k}$ is a path on an even number of vertices where $k \geq 3$, then $P_{2k}$ is not ${\rm SRSI}$. 
\end{lem}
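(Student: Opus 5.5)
Exactly as in the proof of the preceding lemma for $P_{2k+1}$, I will find a vertex $v$ of $P_{2k}$ and a weighting $w$ for which $m_w(P_{2k}\setminus\{v\},x)$ has a repeated root. Then $P_{2k}\setminus\{v\}$ is not SR$_w$, so $P_{2k}$ is not SRSI$_w(v)$, and hence not SRSI.

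Label the path $v_1v_2\cdots v_{2k}$. Deleting an interior vertex $v_j$ leaves $P_{j-1}\cup P_{2k-j}$, and by item (1) of Proposition \ref{bridges} the matching polynomial of this forest is the product of the two factors. A repeated root can therefore arise in two ways: the two factors share a root, or a single factor has a repeated root. The second is impossible because paths are SR. So we need the two components to share a root.

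The roots are symmetric about $0$ by Theorem \ref{realroots}, so it is convenient to use the nonzero root $\pm 1$. The natural choice is $v=v_3$, which leaves $P_2\cup P_{2k-3}$.
\begin{itemize}
\item Give the $P_2$ edge weight $1$, so its matching polynomial is $x^2-1$ with roots $\pm1$.
\item The other component $P_{2k-3}$ has an odd number of vertices, namely $2k-3\ge 3$ because $k\ge 3$. This is exactly where the hypothesis $k\geq3$ enters: for $k=2$ this component would be a single vertex, whose only root is $0$.
\item Weight $P_{2k-3}$ so that $1$ is a root. I would use the Jacobi realization in Proposition \ref{mw-tree}: by the inverse eigenvalue result for Jacobi matrices \cite{AG}, or Theorem \ref{H-Leal} applied to the path, there is a hollow tridiagonal matrix with positive off-diagonal entries whose spectrum is any prescribed set of distinct reals symmetric about $0$. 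Prescribe a spectrum containing $0$ and $\pm1$, with the remaining values chosen distinct and symmetric. Since $2k-3\ge3$ there is room for these values.
\end{itemize}

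For a concrete alternative, weight $P_3$ by $a,b$, giving $m=x(x^2-a-b)$, and take $a+b=1$. For a longer odd path one can instead attach the recursion and simply scale: the roots of $m_w(P_{2k-3},x)$ scale by $t$ when all weights are multiplied by $t^2$, so any weighting can be rescaled to make some positive root equal to $1$.

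With such a weighting, $1$ is a root of both factors, so it is a double root of $m_w(P_{2k}\setminus\{v_3\},x)$. This violates condition 2) of Definition \ref{srsi}.

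The only real obstacle is guaranteeing that a weight exists placing $1$ among the roots of the odd path. The scaling argument disposes of this cleanly, and I would present that argument rather than the inverse eigenvalue citation.
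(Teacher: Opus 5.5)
Your proof is correct and is essentially the paper's argument: delete $v_3$ to get $K_2\cup P_{2k-3}$, then force the two components to share a nonzero root. The only difference is which side you adjust. The paper fixes any weighting of $P_{2k-3}$, takes a nonzero root $\lambda_i$, and sets $w_{12}=\lambda_i^2$, so it needs neither your rescaling step nor the inverse-eigenvalue citation.
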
 

\begin{proof}
Let $P_{2k}$ be the path on an even number of vertices where $k \geq 3$, and, assuming the vertices are labeled in increasing order, consider the removal of vertex $3$ in $P_{2k}$. Then, $P_{2k} \setminus \{3\} = K_2 \cup P_{2k-3}$. Suppose $0 \neq \lambda_i \in \rho( m_w( P_{2k-3}, x))$, for some fixed weighting, $w$, of $P_{2k-3}$, which must exist as $k \geq 3$. Since $K_2$ can realize any nonzero root of the form, $\pm \sqrt{w_{12}}$, let $w$ be the weight function that assigns the edge $w_{12} = {\lambda_i}^2$ in $K_2$. Hence, it follows that $P_{2k} \setminus \{3\}$ has the root, $\lambda_i$, with multiplicity 2, so it is not ${\rm SR}$. Thus, $P_{2k}$ is not ${ \rm SRSI}$.
\end{proof} 

The remainder of this section relies on the next important fact concerning a special inverse eigenvalue problem for entry-wise nonnegative tridiagonal matrices with zero main diagonal.

\begin{prop}(\cite[Theorem 3.2]{AG}) 
Let an arbitrary collection of numbers, $\{ \lambda_1, \lambda_2, \ldots, \lambda_n\}$, be given. In order for this collection to be the spectral data for a Jacobi matrix with nonnegative entries of the form, 

$$R = \begin{bmatrix}
0 & r_0 & 0 & \cdots & 0 & 0 & 0 \\
r_0 & 0 & r_1& \cdots & 0 & 0 & 0 \\
0 & r_1& 0 & \cdots & 0 & 0 & 0 \\
\vdots & \vdots & \vdots& \ddots & \vdots & \vdots & \vdots \\
0 & 0 & 0 & \cdots & 0 & r_{n-3}& 0 \\
0 & 0 & 0 & \cdots & r_{n-3}& 0 & r_{n-2} \\
0 & 0 & 0 & \cdots &  0 & r_{n-2} & 0\\
\end{bmatrix}, $$
 it is necessary and sufficient that the following condition is satisfied: 
\begin{itemize}
\item[(i)]{The numbers, $\{ \lambda_1, \lambda_2, \ldots, \lambda_n\}$, are real, distinct and can be ordered so that $\lambda_1 < \lambda_2 < \cdots < \lambda_n$ where $\lambda_k = -\lambda_{n-k+1}$ $(k = 1, 2, \ldots, \lfloor \frac{n}{2} \rfloor)$ and $\lambda_{\lfloor \frac{n}{2} \rfloor + 1} = 0$ if $n$ is odd.}
\end{itemize} 
\label{Jacobi} 
\end{prop}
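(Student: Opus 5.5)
Necessity is the easy direction. Suppose $R$ has the displayed form with $r_0,\dots,r_{n-2}\ge 0$ and is the Jacobi matrix in question, so every $r_i>0$ and $R$ is irreducible. By the facts recalled at the start of this section, its eigenvalues are then real and simple. Symmetry about $0$ follows in either of two ways. One is Proposition \ref{mw-tree}: $R\in S^o_+(P_n)$, so $\det(xI-R)=m_w(P_n,x)$ with $w(e_i)=r_i^2$, and $m_w(P_n,x)$ contains only powers $x^{n-2k}$; it is therefore even or odd according to the parity of $n$, and Theorem \ref{realroots} applies. The other is direct: $DRD=-R$ for $D=\mathrm{diag}(1,-1,1,\dots)$. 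When $n$ is odd, $m_w(P_n,x)$ is odd, so $0$ is a root. Since the roots are distinct, it is the middle root $\lambda_{\lfloor n/2\rfloor+1}$.

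For sufficiency, assume $\lambda_1<\dots<\lambda_n$ satisfy (i) and set $f(x)=\prod(x-\lambda_i)$. Condition (i) makes $f$ even for $n$ even and odd for $n$ odd. I would produce an auxiliary sequence $\mu_1<\dots<\mu_{n-1}$ that is symmetric about $0$ and strictly interlaces the $\lambda$'s. A concrete choice is $\mu_j=(\lambda_j+\lambda_{j+1})/2$. The symmetry $\lambda_k=-\lambda_{n-k+1}$ gives $\mu_j=-\mu_{n-j}$. When $n-1$ is odd, the middle value is $\mu_{n/2}=(\lambda_{n/2}+\lambda_{n/2+1})/2=0$. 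Put $g(x)=\prod(x-\mu_j)$, which has parity opposite to $f$.

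I would then run the classical Lanczos / continued-fraction construction of a Jacobi matrix from the data $(f,g)$. Define $p_n=f$ and $p_{n-1}=g$, and perform Euclidean division:
\[
p_{k+1}(x)=(x-a_k)\,p_k(x)-b_k\,p_{k-1}(x).
\]
Strict interlacing of consecutive $p$'s is preserved at each step and forces $b_k>0$; this is the standard Hermite--Biehler / Sturm-sequence argument. The key extra point is parity. If $p_{k+1}$ and $p_k$ have opposite parities, then $p_{k+1}-xp_k$ has the parity of $p_{k+1}$, while $a_kp_k$ has the opposite parity. Since the remainder must have degree less than $\deg p_k$, this forces $a_k=0$. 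The remainder $b_kp_{k-1}$ then again has parity opposite to $p_k$. Inductively every $a_k=0$, so the resulting tridiagonal matrix is hollow, and we may take $r_k=\sqrt{b_k}>0$. This is the same division-algorithm observation used in the proof of Theorem \ref{H-Leal}. The matrix obtained has characteristic polynomial $f$, which finishes sufficiency.

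The main obstacle is justifying that the Euclidean algorithm keeps strict interlacing at every step with $b_k>0$. I would handle it by evaluating the recurrence at the roots of $p_k$: there $p_{k+1}=-b_kp_{k-1}$, and the sign alternation of $p_{k+1}$ at the interlacing roots of $p_k$ forces both $b_k>0$ and strict interlacing of $p_{k-1}$ with $p_k$. Alternatively, one can cite the standard Hochstadt / de Boor--Golub result that a strictly interlacing pair $(\lambda,\mu)$ determines a unique Jacobi matrix with $\sigma(R)=\lambda$ and $\sigma(R(1))=\mu$. Symmetry then shows that $-DRD$ has the same data, so uniqueness gives $-DRD=R$ and hence zero diagonal.
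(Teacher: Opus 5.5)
Your proof is correct, and it takes a different route from the paper, which does not prove this proposition but quotes it from \cite[Theorem 3.2]{AG}. Both directions hold up. For necessity, $DRD=-R$ (or the parity of $m_w(P_n,x)$ via Proposition~\ref{mw-tree}) gives symmetry, and simplicity follows from irreducibility. For sufficiency, the midpoint sequence $\mu_j=(\lambda_j+\lambda_{j+1})/2$ is symmetric and strictly interlacing; running the Euclidean/Lanczos recursion on $(f,g)$ and comparing coefficients of $x^k$ then forces $a_k=0$. Your alternative is also valid: uniqueness of the Jacobi matrix with prescribed $\sigma(R)$ and $\sigma(R(1))$, combined with $R\mapsto -DRD$.

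The parity step is exactly the division-algorithm remark the paper states without justification in the proof of Theorem~\ref{H-Leal} to conclude $a=0$, so you supply the missing detail.

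Your route also gives more than the citation as stated. Nothing in the construction uses the midpoints beyond symmetry and strict interlacing. So for any symmetric, strictly interlacing $\mu$ it yields a hollow Jacobi matrix with $\sigma(R)=\lambda$ and $\sigma(R(n))=\mu$. This two-spectrum version is what the base case $n=2,3$ of Theorem~\ref{H-Leal} actually needs, and the proposition as written (a single spectrum) does not literally provide it. Citing \cite{AG} buys only brevity.

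Two small points. You are right to read ``Jacobi'' as forcing every $r_i>0$: if some $r_i=0$ were allowed, necessity would fail through repeated eigenvalues. Also, to get $b_k>0$ rather than just interlacing, say explicitly that $p_{k+1}$ is negative at the largest root of $p_k$. Sign alternation alone gives the root count but not the sign of the leading coefficient of the remainder.
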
 

\vspace*{-0.25cm}
In other words, any collection of roots that satisfies the above properties can be realized by certain matrices in $S_{+}^{o}(P_n)$ associated with the path on the vertices of $n$, $P_n$. Therefore, paths play a vital role in determining whether a root is common in the spectrum of a graph and one of its vertex-deleted subgraphs, and the minimum non-induced path cover number of a graph is a particular property of interest. 

\begin{defn}
For any graph, $G$, the \emph{minimum non-induced path cover number of $G$} is the smallest number of non-induced paths it takes to partition all the vertices in $G$, and is denoted by $\Upsilon(G)$.
\label{npathcover}
\end{defn} 

\begin{thm}
Let $T = (V,E,w)$ be a tree with any weight function, $w$. If $\Upsilon (T) \geq 2$, then $T$ is not ${ \rm SRSI}$. 
\end{thm}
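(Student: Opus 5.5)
The plan is to turn the hypothesis $\Upsilon(T)\ge 2$ into a branch vertex, and then build, from any starting weighting, a weighting under which deleting that vertex produces a repeated root. Recall that SRSI requires the SRSI$_w(v)$ property for \emph{every} weighting $w$ and \emph{every} vertex $v$. So it suffices to find one vertex $v$ and one weighting for which $m_w(T\setminus\{v\},x)$ fails to have simple roots. Starting from an arbitrary weight function $w$ as in the statement, we will modify $w$ only on a single branch at $v$; in several cases no modification is needed at all.

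\emph{Step 1: using $\Upsilon(T)\ge 2$.} In a tree, every path (induced or not) is a subpath of $T$. If $T$ had maximum degree at most $2$, then $T$ itself would be a path and $\Upsilon(T)=1$. Hence $\Upsilon(T)\ge 2$ forces a vertex $v$ with $\deg(v)\ge 3$. Then $T\setminus\{v\}$ is a forest with at least three components $C_1,C_2,C_3,\ldots$. By item (1) of Proposition \ref{bridges},
\[m_w(T\setminus\{v\},x)=\prod_i m_w(C_i,x).\]
Consequently, any root common to two distinct factors is a repeated root, and then $T$ is not SRSI$_w(v)$.

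\emph{Step 2: case analysis on three of the components.} Since there are at least three components, the pigeonhole principle gives two cases.
\begin{enumerate}
\item At least two of the components are single vertices. Then $0$ is a root of both factors, so $0$ is a double root. This happens for every $w$.
\item At least two of the components, say $C_1$ and $C_2$, contain an edge.
\begin{enumerate}
\item If both have odd order, each factor has $0$ as a root, because $m_w(C_i,x)$ is a polynomial in $x$ of odd degree whose powers all have the same parity. Again $0$ is a double root for every $w$.
\item Otherwise, say $C_1$ has even order. By Theorem \ref{realroots}, $m_w(C_1,x)$ has real roots, and its constant term $\pm\mu_w(C_1,|C_1|/2)$ shows $0$ is not forced. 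More simply, since $C_1$ has an edge, $\mu_w(C_1,1)>0$, so not all roots of $m_w(C_1,x)$ vanish. Hence there is a nonzero root $\lambda$, and likewise $C_2$ has a nonzero root $\mu$.
\end{enumerate}
\end{enumerate}

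\emph{Step 3: the rescaling.} This is the only step needing care. Replacing every weight on $C_2$ by $t\,w(e)$ with $t>0$ changes $\mu_w(C_2,k)$ into $t^k\mu_w(C_2,k)$. Hence
\[m_{tw}(C_2,x)=t^{|C_2|/2}\,m_w(C_2,x/\sqrt t)\]
(up to the obvious parity adjustment), so the roots of $m_w(C_2,x)$ are scaled by $\sqrt t$. Choosing $t=(\lambda/\mu)^2$ makes $\lambda$ a root of both $m_w(C_1,x)$ and the rescaled $m_{tw}(C_2,x)$. This gives a positive weighting $w'$ of $T$, agreeing with $w$ off $C_2$, for which $\lambda$ is a double root of $m_{w'}(T\setminus\{v\},x)$.

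In every case $T$ fails to be SRSI$_{w'}(v)$ for some weighting $w'$, and in Cases 1 and 2(a) it fails for the given $w$ itself. Therefore $T$ is not SRSI. The main obstacle is really bookkeeping: checking that the pigeonhole split covers all configurations with at least three branches, and verifying the scaling identity for the weighted matching polynomial.
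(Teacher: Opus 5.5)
Your proof is correct, and it takes a genuinely different and much more economical route than the paper.

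\textbf{The paper's route.} The paper treats stars separately and then inducts on $\Upsilon(T)$. For $\Upsilon(T)=2$ it works from two pictured configurations, choosing a vertex whose deletion leaves a disjoint union of paths. It then uses the Jacobi inverse eigenvalue result of Proposition \ref{Jacobi} to weight two of these paths so that they share an eigenvalue, and applies Cauchy interlacing to transfer this common eigenvalue to the bordered matrix of $T$. The inductive step peels off a path joined by a single edge to a subtree of smaller path cover number.

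\textbf{Your route.} You note that for a tree, $\Upsilon(T)\ge 2$ just means $T$ is not a path, so some vertex has degree at least $3$. From there you need only three facts:
\begin{itemize}
\item multiplicativity of $m_w$ over components;
\item real-rootedness;
\item the homogeneity identity $m_{tw}(C,x)=t^{|C|/2}\,m_w(C,x/\sqrt{t})$, which holds exactly with no parity adjustment.
\end{itemize}

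\textbf{Comparison.} Your argument avoids the inverse eigenvalue theorem, the induction, the separate treatment of stars, and any need for the components to be paths. It also shows that failure occurs at every vertex of degree at least $3$. The paper's approach ties the result to the path cover number and to realizing symmetric spectra by Jacobi matrices on paths. That fits the theme of the section, but your argument shows it is not needed for this statement.

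\textbf{Minor points.}
\begin{itemize}
\item In Step 3, take $\lambda,\mu>0$, which the symmetry in Theorem \ref{realroots} allows. Otherwise $\sqrt{t}\,\mu=-\lambda$, and you need that symmetry anyway.
\item Case 2(a) is subsumed by the scaling argument, which works for any two components that contain an edge, whatever their parity. Case 2(a) does, however, give the stronger conclusion that the original $w$ already fails.
\end{itemize}
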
 

\begin{proof}
Suppose $T = (V,E,w)$ is a tree with any weight function, $w$, and assume $\Upsilon (T) \geq 2$.
Observe that for the star on $n \geq 3$ vertices, $S_n$, the removal of the middle vertex, $v_n$, results in $S_n \setminus \{v_n\} = \underset{n-1}{\underbrace{K_1 \cup K_1 \cup \cdots \cup K_1}}$, so $0$ is a root of $m_w(S_n,x)$ and $m_w(S_n \setminus \{v_n\}, x)$  by interlacing. Hence, the star is never ${ \rm SRSI}$, and we will only consider trees that are not stars. We use induction on $\Upsilon (T)$ to show that there are no ${\rm SRSI}$ trees with $\Upsilon (T) \geq 2$. Let $T \neq S_n$ be a tree with $\Upsilon (T) =2$. Consider the following two cases. 

\noindent
\textbf{Case 1:} Suppose $T$ has the construction in Figure \ref{PathC1}.

\begin{figure}[!htb]
\centering
\includegraphics[scale=.72]{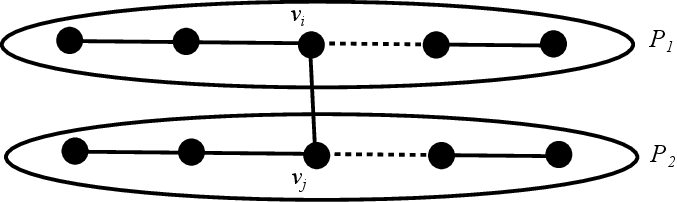} 
\caption{A tree with $\Upsilon(T) = 2.$} 
\label{PathC1}
\end{figure} 

\noindent
Then, there exists a vertex, $v_i$, such that $T \setminus \{v_i\} = P_{t_1} \cup P_{t_2} \cup P_{j}$ where $t_1 + t_2 + j = n-1$, $j \in \{1, 2\}$, and at least two of these paths have at least one edge. Since the vertex-deleted subgraph of $T$ is the disjoint union of paths, the weighted adjacency matrix  $A \in S_+^o(T\setminus \{v_i\})$ has the form 
$$A = \begin{bmatrix}
R_1 & 0 & 0 \\
0 & R_2& 0 \\
0 & 0 & R_3
\end{bmatrix},$$ 
where $R_i$ is a weighted Jacobi matrix in $S_{+}^{o}(P_{t_i})$ for each $P_{t_i}$ as presented in Proposition \ref{Jacobi}. Hence, Proposition \ref{Jacobi} and the Cauchy Interlacing Inequalities guarantee the existence of a root, $\lambda_i$, which is a common root of the characteristic polynomial of $A$, and the characteristic polynomial of a matrix in $S_+^o(T)$ obtained by bordering the matrix, $A$. Using Theorem \ref{mw-tree} we know that the characteristic polynomial of any matrix in $S_+^o(T)$ is equal to a weighted matching polynomial of $T$, and it follows that $T$ is not ${ \rm SRSI}$. 

\noindent
\textbf{Case 2:} Suppose $T$ has the construction in Figure \ref{PathC1}. 

\begin{figure}[!htb]
\centering
\includegraphics[scale=.72]{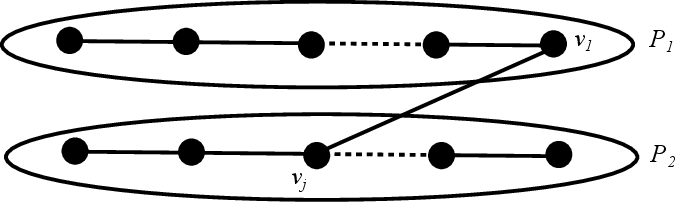} 
\caption{Another tree with $\Upsilon(T) = 2.$} 
\label{PathC2}
\end{figure} 

\noindent 
Removing vertex $v_1$ from $T$ results in $T \setminus \{v_1\} = P_{t_1} \cup P_{t_2}$ where $t_1 + t_2 = n-1$, and each $P_{t_i}$ has at least one edge. Since the vertex-deleted subgraph of $T$ is the disjoint union of at least two nonempty paths, the weighted adjacency matrix  $A \in S_+^o(T \setminus \{v_1\})$ has the form 
$$A = \begin{bmatrix}
R_1 & 0 \\
0 & R_2 
\end{bmatrix},$$ 
where $R_i$ is the weighted 
weighted Jacobi matrix in $S_{+}^{o}(P_{t_i})$ for each $P_{t_i}$ as presented in Proposition \ref{Jacobi}. Hence, Proposition \ref{Jacobi} and the Cauchy Interlacing Inequalities guarantee the existence of a root, $\lambda_i$, which is a common root of the characteristic polynomial of $A$, and the characteristic polynomial of a matrix in $S_+^o(T)$ obtained by bordering the matrix, $A$. Again, by Theorem \ref{mw-tree}  the characteristic polynomial of any matrix in $S_+^o(T)$ is equal to a weighted matching polynomial of $T$, so $T$ is not ${ \rm SRSI}$. 

Now, assume that any tree, $T \neq S_n$ with $\Upsilon (T) = k \geq 3$ is not ${\rm SRSI}$, and consider a tree $\widehat{T} \neq S_n$ with $\Upsilon (\widehat{T}) = k+1$ as described in Figure \ref{PathK}.

\begin{figure}[!htb]
\centering
\includegraphics[scale=.72]{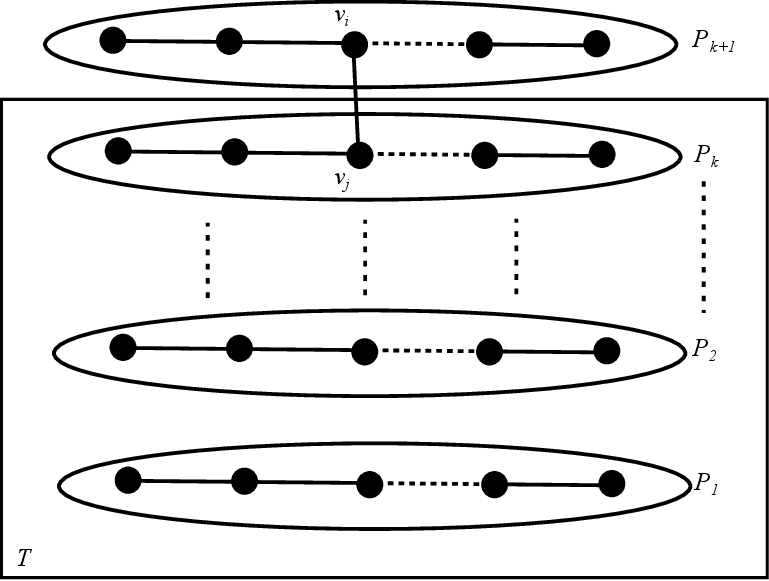} 
\caption{A tree with $\Upsilon(T) = k+1$.} 
\label{PathK}
\end{figure} 

Observe that there is only one edge, $e = \{ v_i, v_j\}$, from the induced subgraph, $T$, with $\Upsilon (T) = k$ to the subgraph, $P_{k+1}$, since $\widehat{T}$ is a tree.  If the vertex, $v_{i}$, resides in the subgraph, $P_{k+1}$, then either the removal of $v_i$ results in the subgraph, $T$, and at most two other paths in which at least one path has an edge \big(i.e., $\widehat{T} \setminus \{ v_i\} = T \cup P_{t_1}$ or $\widehat{T} \setminus \{ v_i\} = T \cup P_{t_1} \cup P_{t_2}$ \big) or the path $P_{k+1}$ is an edge or is a path on three vertices with $v_i$ in the middle. In the latter two cases, we can apply an argument similar to the one below by removing the vertex $v_j$ as pictured in Figure \ref{PathK} as $k\geq 3$. Continuing with the former case, since the vertex-deleted subgraph of $\widehat{T}$ has at least one disjoint path, Proposition \ref{Jacobi} ensures that for some fixed root, $\lambda_i$, of $m_w(T, x)$ can be realized in the spectrum of this nonempty disjoint path. Hence, $\widehat{T}$ is not ${ \rm SRSI}$. By induction, any tree with $\Upsilon (T) \geq 2$ is not ${\rm SRSI}$. 
\end{proof}

\begin{ex}
It is clear that a path on two vertices, or $K_2$, is SRSI. Consider $P_4 = (V,E, w)$ a path on four vertices with any fixed weight function, $w$. 

\begin{figure}[h]
\centering
\includegraphics[scale=.75]{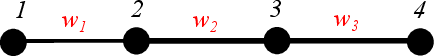} 
\caption{Determining ${\rm SRSI}_w(v)$ vertices in $P_4$.} 
\end{figure} 

Recall the path is ${\rm SR}_w$ for any weight function $w$, and the weighted matching polynomial for $P_4$ is $m_w(P_4,x) = x^4 -(w_1+w_2 +w_3)x^2 +w_1w_3$. It follows from Corollary \ref{HamPath} that $P_4$ is  ${\rm SRSI}_w(1)$ and ${\rm SRSI}_w(4)$. Without loss of generality, consider the weighted matching polynomial for $P_4 \setminus \{2\}$:
$$ m_w(P_4 \setminus \{2\},x) = x(x^2-w_3).$$
The collection of roots for this subgraph is $\rho(m_w(P_4 \setminus \{2\},x)) = \{0, \pm \sqrt{w_3} \}$, which are distinct. For each root, $ \lambda_i \in \rho(m_w(P_4 \setminus \{2\},x))$, consider the following equations of $m_w(P_4, \lambda_i)$:
\begin{align*}
m_w(P_4,0) &= w_1w_3 \neq 0, \\
m_w(P_4,\pm \sqrt{w_3}) &= w_3^2 - (w_1 + w_2 + w_3)w_3 + w_1w_3 = -w_2w_3\neq 0.
\end{align*}
Since each weight $w_i$, is positive, the roots $\{ 0, \pm \sqrt{w_3}\}$ are not roots of $m_w(P_4,x)$. Thus, $P_4$ is ${\rm SRSI}_w(2)$, and, by a similar argument using symmetry, $P_4$ is ${\rm SRSI}_w(3)$. Thus, it follows that $P_4$ is ${\rm SRSI}$ at every vertex.
\label{P4ex}
\end{ex} 

\begin{cor}
Among all trees on at least two vertices, only the paths $P_2$ and $P_4$ are SRSI.
\end{cor}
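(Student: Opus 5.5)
The plan is to assemble the corollary from the results of this subsection. Let $T$ be a tree on $n \geq 2$ vertices that is SRSI. I would first dispose of the trees that are not paths. If $T$ is not a path, then $T$ has a vertex of degree at least three. Any partition of $V(T)$ into paths that are subgraphs of $T$ must then contain at least two paths: a single spanning path would be a Hamilton path of $T$, which would force $T$ itself to be a path. Hence $\Upsilon(T) \geq 2$, and by the preceding theorem $T$ is not SRSI. The star $S_n$ with $n \geq 3$ is covered explicitly in that proof, and in any case it has $\Upsilon \geq 2$.

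It remains to treat $T = P_n$. If $n$ is odd, then $n = 2k+1$ with $k \geq 1$, since $n \geq 2$. The lemma on odd paths shows that $P_{2k+1}$ is not SRSI: deleting the middle vertex gives $P_k \cup P_k$, and a symmetric weighting produces double roots. If $n = 2k$ with $k \geq 3$, the lemma on even paths shows that $P_{2k}$ is not SRSI: deleting vertex $3$ gives $K_2 \cup P_{2k-3}$, and choosing $w_{12}$ equal to the square of a nonzero root of $m_w(P_{2k-3},x)$ creates a repeated root. The only remaining candidates are $P_2$ and $P_4$.

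Finally I would verify that these two are indeed SRSI. For $P_2 = K_2$ we have $m_w(P_2,x) = x^2 - w_{12}$, whose roots $\pm\sqrt{w_{12}}$ are simple. Deleting either vertex leaves $K_1$, with single root $0$, and $-\sqrt{w_{12}} < 0 < \sqrt{w_{12}}$ is strict interlacing for every positive weight. For $P_4$, Example \ref{P4ex} shows SRSI$_w$ at every vertex for every weighting. Vertices $1$ and $4$ are handled by Corollary \ref{HamPath}. For vertices $2$ and $3$, the roots $0$ and $\pm\sqrt{w_3}$ (respectively $0$ and $\pm\sqrt{w_1}$) of the deleted subgraph are simple and are not roots of $m_w(P_4,x)$. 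Combined with the weak interlacing of Theorem \ref{interlace}, this gives strict interlacing.

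The only step needing care is the first: justifying that a non-path tree has $\Upsilon(T) \geq 2$. This amounts to observing that a tree with a Hamilton path is a path, because such a path already uses $n-1$ edges and $T$ has exactly $n-1$ edges. Everything else is a direct citation of the preceding lemmas, the theorem on $\Upsilon$, and the example.
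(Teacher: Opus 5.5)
Your proposal is correct and takes essentially the same route as the paper, which states the corollary without a separate proof, as a direct consequence of the two path lemmas, the theorem that $\Upsilon(T)\geq 2$ rules out SRSI, and Example~\ref{P4ex}, together with the remark that $K_2$ is SRSI. Your explicit justification that a non-path tree has $\Upsilon(T)\geq 2$ (a spanning path in a tree would use all $n-1$ edges) fills in the one step the paper leaves implicit.
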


\vspace*{-0.6cm}
\section*{Acknowledgments}
S.M.\ Fallat was supported in part by an NSERC Discovery Research Grant, Application No.: RGPIN--2025-05272. J.\ Parenteau was supported during her MSc. studies in part by an NSERC CGSM scholarship and the University of Regina. Currently, J.\ Parenteau is partially supported by an NSERC PGSD scholarship and the University of Regina.

\end{document}